\numberwithin{equation}{section}
\newtheorem{proposition}{\textbf{Proposition}}[section]
\newtheorem{definition}{\textbf{Definition}}[section]
\newtheorem{theorem}{\textbf{Theorem}}[section]
\newtheorem{lemma}{\textbf{Lemma}}[section]
\newtheorem{remark}{\textbf{Remark}}[section]
\newtheorem{claim}{\textbf{Claim}}[section]
\begin{document}


\title{\textbf{Blow-up for a semilinear heat equation with Fujita's critical exponent on locally finite graphs}}
\author{ Yiting Wu}

\date{}
\maketitle

\vspace{-12pt}


\begin{minipage}{145mm}
\noindent{\small\textbf{Abstract} \;Let $G=(V,E)$ be a locally
finite, connected and weighted graph. We prove that, for a graph
satisfying curvature dimension condition $CDE'(n,0)$ and uniform
polynomial volume growth of degree $m$, all non-negative solutions
of the equation $\partial_tu=\Delta u+u^{1+\alpha}$ blow up in a
finite time provided that $\alpha=\frac{2}{m}$. We also consider
the blow-up problem under certain conditions for volume growth and
initial value. The obtained results provide a significant
complement to the work by Lin and Wu in earlier paper.
\newline\textbf{Keywords}: Semilinear heat equation; Critical exponent; Blow-up; Heat kernel estimate; Graph
\newline\noindent \textbf{2010 Mathematics Subject Classification}: 35A01; 35K91; 35R02; 58J35
}
\end{minipage}

\vspace{12pt}

\section{Introduction and main results}

In Euclidean space $\mathbb{R}^m$, the following semilinear heat equation
\begin{equation*}
\label{E1}
\partial_t u=\Delta u + u^{1+\alpha}, \quad\quad  u(0,x)=a(x)
\end{equation*}
has been considered by many authors (see \cite{Fujita,Hayakawa,Kobayashi,W1,W2}),
one of the most celebrated results is from Fujita \cite{Fujita}.
Fujita showed that all non-negative solutions of the above equation blow up in a finite time if $0<\alpha<\frac{2}{m}$,
whereas there is a global solution to the above equation for a sufficiently small initial value if $\alpha>\frac{2}{m}$.
The value $\alpha=\frac{2}{m}$ is called Fujita's critical exponent.
Several authors independently showed that the critical exponent belongs to the blow-up case in $\mathbb{R}^m$
(see \cite{Hayakawa,Kobayashi}).

In recent years the study of equations on graphs has received a lot of attention from many researchers,
see \cite{CLC,gly1,gly2,KS,LW2,LW3,W} and references cited therein.
In \cite{LW2} Lin and Wu extended Fujita's results to the case of locally finite graphs,
they studied the existence and nonexistence of global solutions for the following semilinear heat equation
\begin{equation}
\label{E:1.1}
\left\{
\begin{array}{lc}
\partial_t u=\Delta u + u^{1+\alpha}, &\, \text{$(t,x) \in$ $(0,+\infty)\times V$,}\\
u(0,x)=a(x), &\, \text{$x \in$ $V$,}
\end{array}
\right.
\end{equation}
where $\alpha$ is a positive parameter,  $a(x)$ is bounded, non-negative and non-trivial in $V$.

Under the conditions of curvature dimension $CDE'(n,0)$ and
uniform polynomial volume growth of degree $m$, Lin and Wu
\cite{LW2} proved that if $0<\alpha<\frac{2}{m}$, then all
non-negative solutions of \eqref{E:1.1} blow up in a finite time,
and if $\alpha>\frac{2}{m}$, then there exists a non-negative
global solution to \eqref{E:1.1} for any non-negative initial
value dominated by a sufficiently small Gaussian. Actually the
results of \cite{LW2} do not include the case of the critical
exponent $\alpha=\frac{2}{m}$. Recently, Wu \cite{W} proved the
blow-up result for $\alpha=\frac{2}{m}$ on finite graphs.
But,
the question of whether $\alpha=\frac{2}{m}$  still belongs to the
blow-up case on locally finite graphs remains unsolved.

In this paper, we deal with the blow-up problem for \eqref{E:1.1}
on locally finite graphs. We first consider the blow-up phenomenon
of \eqref{E:1.1} for $\alpha=\frac{2}{m}$ on graphs satisfying
uniform polynomial volume growth of degree $m$. And then we relax
volume growth condition and restrain initial value condition of
\eqref{E:1.1} to show the blow-up result for $0<\alpha\leq
\frac{2}{m}$. Our main results are as follows:


\begin{theorem}
\label{theorem2} Suppose that $D_\mu,D_\omega<\infty$ and that $G$
satisfies curvature condition $CDE'(n,0)$ and the volume growth
condition (c.1) below. If $\alpha=\frac{2}{m}$, then all
non-negative solutions of \eqref{E:1.1} blow up in a finite time.
\end{theorem}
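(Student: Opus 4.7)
The plan is to argue by contradiction. Suppose $u$ is a non-negative global solution of \eqref{E:1.1} on $[0,\infty)\times V$; the aim is to show that $u(T,x_0)=+\infty$ for some $T$ and $x_0$, which is impossible for a pointwise solution on the graph. The starting point is the Duhamel-type mild formulation
\[
u(t,x)\ =\ P_t a(x)\ +\ \int_0^t \sum_{y\in V} p_{t-s}(x,y)\,u^{1+\alpha}(s,y)\,\mu(y)\,ds,
\]
where $P_t$ is the heat semigroup with kernel $p_t(x,y)$. Since $u^{1+\alpha}\ge 0$, both $P_t a(x)$ and the integral term serve as lower bounds for $u$; the latter drives the iteration.

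Under $CDE'(n,0)$ together with uniform polynomial volume growth of degree $m$ and $D_\mu,D_\omega<\infty$, the heat kernel enjoys two-sided Gaussian-type bounds of the form $p_t(x,y)\gtrsim t^{-m/2}$ whenever $t\ge 1$ and $d(x,y)^2\lesssim t$ (precisely the estimates used in \cite{LW2}). In particular $P_t a(x) \ge c_0/t^{m/2}$ for $t$ large and $x\in B(o,\sqrt{t})$. Plugging this lower bound into the integral term, restricting the $y$-summation to $B(o,\sqrt{s})$ (whose $\mu$-volume is comparable to $s^{m/2}$), and using $p_{t-s}(x,y)\gtrsim t^{-m/2}$ for $s\le t/2$, the critical identity $(1+\alpha)m/2=m/2+1$ turns the $s$-integrand into a factor of $1/s$, so
\[
u(t,x)\ \ge\ \frac{c_1\log t}{t^{m/2}}\qquad\text{for }x\in B(o,\sqrt{t}),
\]
once $t$ is sufficiently large. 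This is the first gain coming from the criticality.

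Iterating with the ansatz $u(s,y)\ge c_n(\log s)^{\beta_n}/s^{m/2}$ on $B(o,\sqrt{s})$ yields the recursions
\[
\beta_{n+1}=(1+\alpha)\beta_n+1,\qquad c_{n+1}=\frac{c\,c_n^{1+\alpha}}{\beta_n(1+\alpha)+1}.
\]
Because $\beta_n$ grows like $(1+\alpha)^n$ while $\log c_n$ only decays like $-O((1+\alpha)^n)$, the quantity $\log c_n+\beta_n\log\log T$ tends to $+\infty$ once $T$ is chosen large enough, forcing $u(T,x_0)=+\infty$ for any $x_0\in B(o,\sqrt{T})$, which is the desired contradiction. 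The main obstacle will be the bookkeeping that controls $c_n$ along the iteration: one must verify that the multiplicative losses coming from the non-sharpness of the discrete heat-kernel bounds, the uniform polynomial volume estimates for $B(o,\sqrt{s})$, and the $s\in[1,t/2]$ cutoff together do not defeat the single $\log$ gain per step, so that $T$ can indeed be chosen sufficiently large in terms of the eventual asymptotic rate of $\log c_n$. A secondary technical point is handling the short-time part of the integral $s\in[0,1]$, where the Gaussian lower bound is not available; this contribution will be absorbed harmlessly into $c_0$.
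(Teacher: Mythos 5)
Your strategy (iterate the Duhamel lower bound, gaining one logarithm per step at the critical exponent) is genuinely different from the paper's, but it has a gap that I do not think the advertised ``bookkeeping'' can repair. To apply the Gaussian lower bound to $p_{t-s}(x,y)$ with a constant independent of $s$ you must cut the time integral at $s\le t/2$ (for $s$ close to $t$ the factor $\exp(-c_3 d^2(x,y)/(t-s))$ degenerates), and then the step integral evaluates to
\[
\int_{T_0}^{t/2}\bigl(\log(s/T_0)\bigr)^{(1+\alpha)\beta_n}\,\frac{ds}{s}
=\frac{1}{\beta_{n+1}}\Bigl(\log\frac{t}{2T_0}\Bigr)^{\beta_{n+1}},
\]
so each iteration replaces $\log t$ by $\log(t/2)$. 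This shift compounds: after $n$ steps the logarithmic factor is $\log\bigl(t/(2^nT_0)\bigr)$, which is nonpositive once $n\ge\log_2(t/T_0)$. Hence for a \emph{fixed} $T$ only finitely many iterates are non-vacuous, and the quantity you propose to send to $+\infty$ is really $\log c_n+\beta_n\log\log\bigl(T/(2^nT_0)\bigr)$, which does not tend to $+\infty$. Equivalently, the inequality your scheme encodes, $M(t)\ge\kappa\int_{T_0}^{t/2}M(s)^{1+\alpha}\,ds/s$, is of delayed type (the growth at time $t$ is driven by $M(t/2)$, not $M(t)$), and such inequalities admit solutions finite for all $t$. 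The iteration therefore only yields $u(t,x_0)\to\infty$ as $t\to\infty$ (super-polynomially), which is not a contradiction with global existence in the sense of Definition 3.2.

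The paper closes exactly this gap with an a priori \emph{upper} bound. Lemma 4.1 runs a loss-free version of the iteration on $P_ta$ itself, using the exact semigroup identity $P_{t-s}P_s a=P_t a$ over the full interval $[0,t]$ (no cutoff, no shift), and obtains $t^{1/\alpha}P_ta(x)\le C'$ for every global solution; Lemma 4.2 converts this, via the Gaussian lower bound at $t=c_3r^2$, into the uniform mass bound $\sum_{y\in B_r}\mu(y)u(t,y)\le C''$. Then a \emph{single} application of your critical log-gain, applied to $G(t,r)=\sum_{y\in B_r}\mu(y)p(t,x_0,y)u(t,y)$, gives $t^{m/2}G(t,r)\ge C_3\log\bigl(t/(c_3\alpha r^2)\bigr)$, contradicting $t^{m/2}G(t,r)\le cc_1C''$ for large $t$. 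If you want to keep your iteration, you would still need something like Lemmas 4.1--4.2 to turn super-polynomial growth into a contradiction, at which point one log gain already suffices.
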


\noindent Condition (c.1). There exist $c>0$ and $x_0\in V$ with
$a(x_0)>0$ such that the inequality
\begin{equation*}
c^{-1}r^m\leq \mathcal{V}(x_0,r) \leq cr^m \quad\quad (m>0)
\end{equation*}
holds for all large enough $r\geq r_0>0$. The condition (c.1) is
called uniform polynomial volume growth of degree $m$.

\begin{theorem}
\label{theorem3} Suppose that $D_\mu,D_\omega<\infty$ and that $G$
satisfies curvature condition $CDE'(n,0)$ and the volume growth
condition (c.2) below. If $0<\alpha\leq \frac{2}{m}$ and
$\inf_{x\in V} a(x)=a_0>0$, then all non-negative solutions of
\eqref{E:1.1} blow up in a finite time.
\end{theorem}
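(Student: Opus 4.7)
The plan is to prove blow-up by comparing $u$ with the spatially constant solution of the associated scalar ODE, exploiting the uniform lower bound $a(x) \geq a_0 > 0$. Let $v(t) = (a_0^{-\alpha} - \alpha t)^{-1/\alpha}$ be the unique positive solution of $\dot v = v^{1+\alpha}$ with $v(0) = a_0$, which blows up at the finite time $T^{\ast} = 1/(\alpha a_0^{\alpha})$. Since constants on $V$ are annihilated by $\Delta$, the function $v$, viewed as a function on $[0,T^{\ast}) \times V$, is itself a classical solution of \eqref{E:1.1} with initial datum $a_0 \leq a(x)$. The goal is then to show $u(t,x) \geq v(t)$ throughout the common interval of existence; this will immediately force the maximal existence time of $u$ to be at most $T^{\ast}$.

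To establish the pointwise comparison, I would fix any $T < T^{\ast}$ on which $u$ exists and is bounded, set $w = u - v$, and observe via the mean value theorem that
\[
\partial_t w = \Delta w + q(t,x)\, w, \qquad w(0,\cdot) = a(\cdot) - a_0 \geq 0,
\]
where $q(t,x) = (1+\alpha)\eta(t,x)^\alpha \geq 0$ for some $\eta$ lying between $u$ and $v$, and $q$ is bounded on $[0,T] \times V$. The main technical step is then to invoke a parabolic comparison principle for this linear equation on the locally finite infinite graph $G$. The hypotheses $D_\mu, D_\omega < \infty$, $CDE'(n,0)$, and the volume growth condition (c.2) are precisely the structural assumptions that make the heat semigroup $P_t$ associated with $\Delta$ well defined, positivity preserving, and stochastically complete on $G$. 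Given these, the Duhamel representation
\[
w(t,x) = P_t w(0,\cdot)(x) + \int_0^t P_{t-s}\bigl(q(s,\cdot)\, w(s,\cdot)\bigr)(x)\, ds
\]
combined with the boundedness of $q$ on $[0,T] \times V$ yields $w \geq 0$ by a standard Picard iteration starting from the nonnegative function $P_t w(0,\cdot)$.

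The main obstacle is this comparison step: on an infinite graph the naive parabolic maximum principle can fail, and one must genuinely use stochastic completeness (i.e.\ $P_t 1 \equiv 1$) to prevent ``mass loss at infinity''. This is where the curvature and volume growth hypotheses are actually consumed; everything else is soft. Once $u(t,x) \geq v(t)$ is known on the common domain of existence, the conclusion is immediate, because $v(t) \to +\infty$ as $t \to T^{\ast}$. Note finally that the comparison argument itself is insensitive to the precise value of $\alpha > 0$; the restriction $\alpha \leq 2/m$ in the statement serves only to place the theorem within the Fujita critical-exponent framework of the paper, and it is the uniform positivity of $a$, rather than smallness or growth of the initial datum, that drives blow-up here.
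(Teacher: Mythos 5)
Your argument is correct, but it takes a genuinely different route from the paper, and it is worth spelling out the trade-off. The paper proves Theorem \ref{theorem3} by a Fujita-type contradiction: assuming a global solution exists, it combines the iterated-Jensen bound $t^{1/\alpha}P_ta(x)\le C'$ of Lemma \ref{lemma:lemma4.1} with the Gaussian heat kernel estimates of Proposition \ref{estimate} (this is where $CDE'(n,0)$ and $D_\omega<\infty$ are consumed) and the two-sided volume bounds of (c.2) to sandwich $G(t,r)=\sum_{y\in B_r}\mu(y)p(t,x_0,y)u(t,y)$ between an upper bound of order $t^{-m/2}(\log t)^{\zeta}$ and a lower bound of order $t^{2+\alpha-\frac{m(1+\alpha)}{2}}(\log 2t)^{-\eta(1+\alpha)}$; the hypothesis $m\alpha\le 2$ is exactly what makes these incompatible for large $t$. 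Your route instead compares $u$ with the spatially constant ODE solution $v(t)=(a_0^{-\alpha}-\alpha t)^{-1/\alpha}$, and the only nontrivial ingredient is the comparison principle for $\partial_tw=\Delta w+qw$ with $q$ bounded and non-negative. That step does go through with the paper's own tools: the Duhamel representation and the uniqueness of bounded solutions established in the proof of Theorem \ref{theorem1} (via Lemmas \ref{lemma:lemma3.1} and \ref{lemma:lemma3.2}) need only $D_\mu<\infty$, and positivity of the heat kernel together with $q\ge0$ makes the Picard iterates non-negative, so $w\ge0$ on successive short time intervals and hence $u\ge v$ up to the ODE blow-up time $T^*=1/(\alpha a_0^{\alpha})$. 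One correction to your closing remarks: stochastic completeness is not supplied by the curvature or volume growth hypotheses but already by $D_\mu<\infty$ (Lemma \ref{lemma:lemma3.2}), so your proof in fact never uses $CDE'(n,0)$, $D_\omega<\infty$, condition (c.2), or the restriction $\alpha\le\frac{2}{m}$; it establishes the stronger statement that on any locally finite graph with $D_\mu<\infty$, every non-negative solution with $\inf_Va>0$ blows up in finite time for every $\alpha>0$. This shows the hypotheses of Theorem \ref{theorem3} are far from sharp when the initial datum is uniformly positive; what the paper's heavier heat-kernel method buys is that it is the same scheme that handles Theorem \ref{theorem2}, where $a$ is not bounded below and the critical exponent genuinely matters.
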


\noindent Condition (c.2). There exist $c',c''>0$ and $x_0\in V$
such that the inequality
\begin{equation*}
c'r^m \log^{-\zeta} r \leq \mathcal{V}(x_0,r)\leq c''r^m \log^\eta r \quad\quad (m>0,\;\; \zeta,\eta\geq 0 )
\end{equation*}
holds for all large enough $r\geq r_0>1$.


The remaining parts of this paper are organized as follows.
In Section 2, we introduce some notations
and related results on graphs which are essential to prove our main results.
In Section 3, we give an auxiliary result on
the representation of the solution in an integral equation in terms of its fundamental solution.
In Sections 4 and 5, we prove Theorems \ref{theorem2} and \ref{theorem3}, respectively.
Finally, in Section 6 we discuss a special case of Theorem \ref{theorem3} and illustrate that our results would provide a complement to earlier work on this topic.

\section{Preliminaries}

\subsection{Weighted graphs}

Let $G=(V,E)$ be a locally finite, connected graph without loops or multiple edges.
Here $V$ is the vertex set  and $E$ is the edge set that can be viewed as a symmetric subset of $V\times V$.
We write $y\sim x$ if an edge connects vertices $x$ and $y$.

We allow the edges on the graph to be weighted.
Let $\omega: V \times V\rightarrow[0,\infty)$ be an edge weight function that satisfies $\omega_{xy}=\omega_{yx}$ for all $x,y \in V$
and $\omega_{xy}>0$ if and only if $x\sim y$.
Moreover, we write $m(x):=\sum_{y\sim x}\omega_{xy}$ and assume that this weight function satisfies
$\omega_{\min}:=\inf_{(x,y)\in E} \omega_{xy}>0$.
Let $\mu: V\rightarrow (0,\infty)$ be a positive measure on $V$ and $\mu_{\max}:=\sup_{x\in V} \mu(x)$.
Given a weight and a measure, we define
$$D_\omega:=\frac{\mu_{\max}}{\omega_{\min}}$$
and
$$D_\mu:=\sup_{x\in V}\frac{m(x)}{\mu(x)}.$$

For any $x,y\in V$, $d(x,y)$, the distance between $x$ and $y$,  is the number of edges in the shortest path connecting $x$ and $y$.
The connectedness of $G$ ensures that $d(x,y)<\infty$ for any two points.

For a given vertex $x_0\in V$, let $B_r=B_r(x_0)$ denote the connected subgraph of $G$
consisting of those vertices in $G$ that are at most distance $r$ from $x_0$ and all edges of $G$ that such vertices span.
We can prove by induction that $\#B_r<\infty$.
Further, the volume of $B_r$ can be written as $\mathcal{V}(x_0,r)$,
which is defined by $\mathcal{V}(x_0,r)=\sum_{y\in B_r} \mu(y)$.

\subsection{Laplace operators on graphs}

We denote by $C(V)$ the set of real-valued functions on $V$, by
$$\ell^p(V,\mu)=\left\{ f\in C(V):\sum_{x \in V} \mu(x)|f(x)|^p<\infty \right\}, \;\; 1\leq p<\infty,$$
the set of $\ell^p$ integrable functions on $V$ with respect to
the measure $\mu$.  For $p=\infty$, let
$$\ell^{\infty}(V,\mu)=\left\{ f\in C(V):\sup_{x\in V}|f(x)|<\infty \right\}$$
be the set of bounded functions.

We define the $\mu$-Laplacian $\Delta:C(V)\rightarrow C(V)$ by, for any $x\in V$,
$$\Delta f(x)=\frac{1}{\mu(x)}\sum_{y\sim x}\omega_{xy}\left(f(y)-f(x)\right).$$
It is well-known that
$D_\mu<\infty$ is equivalent to the $\mu$-Laplacian $\Delta$ being bounded on $\ell^p(V,\mu)$ for all $p\in [1,\infty]$
(see \cite{HAESELER}).

Let $C(B_r,\partial B_r)=\left\{ f\in C(B_r):f|_{\partial B_r}=0 \right\}$ denote the set of functions on $B_r$
which vanish on $\partial B_r$,
where $\partial B_r$ is the boundary of $B_r$, i.e.,
$\partial B_r=\left\{ x\in B_r: \text{there is $y\sim x$ such that $y\notin B_r$}  \right\}.$
And the interior of $B_r$ is denoted by $\mathring{B_r}=B_r\setminus \partial B_r$.
The Dirichlet Laplacian $\Delta_r$ is defined as
\begin{equation*}
\Delta_r f=
\left\{
\begin{array}{lcc}
\Delta f&\quad \text{for}\; x\in \mathring{B_r} \\
0&\quad \text{otherwise}
\end{array}
\right.
\end{equation*}
for any $f\in C(B_r,\partial B_r)$.

\subsection{The heat kernel on graphs}

\begin{definition}[\cite{RW}]
\textnormal{
The heat kernel $p_r$ of $\Delta_r$ with Dirichlet boundary conditions is given by
$$p_r(t,x,y)=e^{-t\Delta_r}\delta_x(y) \;\;\;\;\; \text{for $x,y\in B_r$ and $t\geq 0$}.$$}
\end{definition}

The maximum principle implies the monotonicity of the heat kernel, that is, $p_{r}(t,x,y)\leq p_{r+1}(t,x,y)$.
Moreover, $p_r(t,x,y)$ satisfies the following properties (see \cite{RW}):
\begin{proposition}
\label{pr}
\textnormal{For $t,s>0$ and $x,y\in B_r$, we have\\
(i) $\partial_t p_r(t,x,y)=\Delta_r p(t,x,y)$, where $\Delta_r$ denotes the Dirichlet Laplacian applied in either $x$ or $y$;\\
(ii) $p_r(t,x,y)=0$ if $x\in \partial B_r$ or $y\in \partial B_r$;\\
(iii) $p_r(t,x,y)=p_r(t,y,x)$;\\
(iv) $p_r(t,x,y)\geq 0$;\\
(v) $\sum_{y\in B_r}\mu(y)p_r(t,x,y)\leq 1$;\\
(vi) $\sum_{z\in B_r}\mu(z)p_r(t,x,z)p_r(s,z,y)=p_r(t+s,x,y)$.}
\end{proposition}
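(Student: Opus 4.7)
The strategy is to reduce every claim to a property of the bounded, $\mu$-symmetric operator $\Delta_r$ acting on the finite-dimensional inner-product space $(C(B_r,\partial B_r),\langle\cdot,\cdot\rangle_\mu)$, where $\langle f,g\rangle_\mu:=\sum_{x\in B_r}\mu(x)f(x)g(x)$, combined with the discrete parabolic maximum principle on the finite cylinder $[0,T]\times B_r$. I would work throughout with the kernel identity $e^{-t\Delta_r}f(y)=\sum_{z\in B_r}\mu(z)p_r(t,y,z)f(z)$, which corresponds to normalizing $\delta_x(y)=\mu(x)^{-1}\mathbf{1}_{\{x=y\}}$ so that $\sum_y\mu(y)\delta_x(y)=1$; this is the only convention that makes (v) consistent at $t=0$.

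I would handle (i), (iii), and (vi) in one pass. Since $B_r$ is finite, $\Delta_r$ is bounded, so $e^{-t\Delta_r}=\sum_{k\geq 0}(-t)^k\Delta_r^k/k!$ converges in operator norm and may be differentiated term by term; this yields $\partial_t e^{-t\Delta_r}=-\Delta_r e^{-t\Delta_r}=e^{-t\Delta_r}(-\Delta_r)$, and evaluating on $\delta_x$ gives (i) in the $y$-variable (up to the paper's sign convention on the Laplacian). A summation-by-parts computation exploiting $\omega_{xy}=\omega_{yx}$ shows $\Delta_r$ is symmetric with respect to $\langle\cdot,\cdot\rangle_\mu$, hence so is $e^{-t\Delta_r}$; equating $\langle e^{-t\Delta_r}\delta_x,\delta_y\rangle_\mu$ with $\langle\delta_x,e^{-t\Delta_r}\delta_y\rangle_\mu$ delivers (iii), which in turn transfers (i) to the $x$-variable. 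For (vi), the semigroup law $e^{-(t+s)\Delta_r}=e^{-t\Delta_r}\,e^{-s\Delta_r}$ is immediate from the power series, and applying both sides to $\delta_x$ and then evaluating at $y$ via the kernel identity is precisely the Chapman--Kolmogorov convolution claimed.

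Next I would address (ii), (iv), and (v). Property (ii) is essentially definitional: by construction $\Delta_r$ sends $C(B_r,\partial B_r)$ into itself, so $e^{-t\Delta_r}$ does too, whence $p_r(t,x,\cdot)$ vanishes on $\partial B_r$; the case of a boundary $x$ follows from (iii). For (iv) I would invoke the discrete parabolic minimum principle: if $p_r$ attained a negative minimum at some $(t^\ast,y^\ast)$ with $t^\ast>0$ and $y^\ast\in\mathring{B_r}$, then $\Delta_r p_r(t^\ast,x,y^\ast)\geq 0$ in the $y$-variable while $\partial_t p_r\leq 0$ there, contradicting (i) once a linear-in-$t$ perturbation $p_r+\varepsilon t$ has converted the weak inequalities into strict ones. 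For (v), set $u(t,x):=\sum_{y\in B_r}\mu(y)p_r(t,x,y)$; by (i) with $\Delta_r$ acting in $x$ and interchanging $\Delta_r$ with the finite sum, $u$ solves $\partial_t u=\Delta_r u$ with $u(0,\cdot)=\mathbf{1}_{\mathring{B_r}}$ and $u(t,\cdot)|_{\partial B_r}=0$ for all $t$ (by (ii)), and the maximum principle then gives $u\leq 1$.

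The genuinely nontrivial step is the discrete parabolic maximum/minimum principle invoked for (iv) and (v): the usual smooth-calculus argument must be replaced either by the perturbation trick sketched above or by the resolvent representation $e^{-t\Delta_r}=\lim_{n\to\infty}(I+t\Delta_r/n)^{-n}$ together with the observation that $(I+s\Delta_r)^{-1}$ preserves non-negativity on $C(B_r,\partial B_r)$ for all sufficiently small $s>0$. Once this positivity-preservation is in hand, (iv) follows from repeated application to $\delta_x\geq 0$, and the remaining claims reduce to routine linear algebra and summation-by-parts on the finite set $B_r$.
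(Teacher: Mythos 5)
The paper contains no proof of this proposition: it is quoted from Wojciechowski's paper \cite{RW} with a bare citation, so there is no in-paper argument to compare against. Your proof is correct and is essentially the standard one (and essentially the route taken in \cite{RW}): finite-dimensional functional calculus and $\mu$-symmetry for (i), (iii), (vi); invariance of $C(B_r,\partial B_r)$ for (ii); and a discrete parabolic minimum/maximum principle, or equivalently positivity of the resolvent, for (iv) and (v). Two of your side remarks deserve emphasis because they repair genuine imprecisions in the source. First, the normalization $\delta_x(y)=\mu(x)^{-1}\mathbf{1}_{\{x=y\}}$ is indeed forced: without it, (iii), (v) and (vi) are mutually inconsistent. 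Second, the sign issue you flag is not cosmetic: the paper's $\mu$-Laplacian is negative semi-definite, so the Dirichlet semigroup must be $e^{t\Delta_r}$ rather than the $e^{-t\Delta_r}$ written in the paper's definition of $p_r$; taken literally, (i) would read $\partial_t p_r=-\Delta_r p_r$, and your minimum-principle argument for (iv)--(v) would then be applied to the backward heat equation, where it fails. Your proof implicitly, and correctly, adopts the convention under which (i) holds as stated. The only loose end is (ii) for $x\in\partial B_r$: there $\delta_x\notin C(B_r,\partial B_r)$, so the symmetry argument does not apply, and with the naive extension of $\Delta_r$ to all of $C(B_r)$ the function $e^{t\Delta_r}\delta_x$ is not identically zero; the boundary-$x$ case is really a convention ($p_r(t,x,\cdot)\equiv 0$ for $t>0$), which is how \cite{RW} handles it. That is a definitional triviality rather than a mathematical gap, and everything else in your argument goes through.
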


The heat kernel for an infinite graph can be constructed by taking
an exhaustion of the graph. An exhaustion of $G$ is a sequence of
finite subsets with $U_r\subset U_{r+1}$ and $\bigcup_{r\in
\mathbb{N}}U_r=V$ (see \cite{AW,RW}). The connectedness of $G$
implies $\bigcup_{r\in \mathbb{N}}B_r=V$, so we can take
$U_r=B_r$.

\begin{definition}[\cite{AW,RW}]
\label{heat kernel}
\textnormal{For any $t>0$, $x,y\in V$, we define
\begin{equation}
\label{dingyi-p}
p(t,x,y)=\lim_{r\rightarrow \infty}p_r(t,x,y).
\end{equation}}
\end{definition}

Dini's Theorem implies that the convergence of \eqref{dingyi-p} is uniform in $t$ on every compact subset of $[0,\infty)$.
Also, it is known that $p(t,x,y)$ is differentiable in $t$, satisfies the heat equation $\partial_t u=\Delta u$,
and has the following properties (see \cite{AW,RW}):
\begin{proposition}
\textnormal{For $t,s>0$ and any $x,y\in V$, we have\\
(i) $\partial_t p(t,x,y)=\Delta p(t,x,y)$, where $\Delta$ denotes the $\mu$-Laplacian applied in either $x$ or $y$;\\
(ii) $p(t,x,y)=p(t,y,x)$;\\
(iii) $p(t,x,y)> 0$;\\
(iv)  $\sum_{y\in V}\mu(y)p(t,x,y)\leq 1$;\\
(v) $\sum_{z\in V}\mu(z)p(t,x,z)p(s,z,y)=p(t+s,x,y)$;\\
(vi) $p(t,x,y)$ is independent of the exhaustion used to define it.}
\end{proposition}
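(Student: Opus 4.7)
The plan is to lift each property in Proposition 2.1 from the Dirichlet heat kernels $p_r$ to the limit $p$ introduced in Definition 2.2, exploiting the monotonicity $p_r\le p_{r+1}$ and the uniform convergence supplied by Dini's theorem noted right after the definition.

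Items (ii)--(v) are essentially routine passages to the limit. Symmetry (ii) passes termwise. Non-negativity plus strict inequality in (iii) splits in two: non-negativity is immediate from Prop.~2.1(iv); for strict positivity, first observe $p(0,x,x)=1/\mu(x)>0$ and compute, from $p_r(0,x,\cdot)=\delta_x$, that for any neighbor $y\sim x$ one has $\partial_t p(0,x,y)=\omega_{xy}/\mu(y)>0$, so $p(t,x,y)>0$ for small $t>0$ and all $y$ with $d(x,y)\le 1$. Then for arbitrary $x,y\in V$ I would use connectedness to pick a path $x=z_0\sim z_1\sim\cdots\sim z_k=y$ and iterate the semigroup law to get
\begin{equation*}
p(t,x,y)\ge \Bigl(\prod_{i=1}^{k-1}\mu(z_i)\Bigr)\,\prod_{i=0}^{k-1} p(t/k,z_i,z_{i+1})>0.
\end{equation*}
For the mass bound (iv), I would fix a finite $F\subset V$, apply Prop.~2.1(v) to every $r$ with $F\subset B_r$, let $r\to\infty$, and then exhaust $F\uparrow V$. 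For the semigroup identity (v), I would split the $z$-sum at radius $R$: the part over $B_R$ converges by the finite sum limit and the pointwise convergence of $p_r\to p$, while the tail is controlled by $\sum_{z\notin B_R}\mu(z)p(t,x,z)p(s,z,y)\to 0$ as $R\to\infty$ via (iv) and dominated convergence.

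The only genuinely delicate item is (i), the heat equation on $V$. The spatial part is easy because the graph Laplacian is local: for $r$ large enough, every neighbor of the chosen vertex lies in $\mathring{B_r}$, so $\Delta_r p_r(t,x,y)=\Delta p_r(t,x,y)$ is a fixed finite linear combination of values $p_r(t,x,\cdot)$, which converges pointwise to the same combination of $p(t,x,\cdot)$. The time derivative is the step to justify carefully: I would use the integrated form $p_r(t,x,y)-p_r(s,x,y)=\int_s^t \Delta_r p_r(\tau,x,y)\,d\tau$ and pass to the limit under the integral. Here the uniform-in-$t$ convergence of $p_r\to p$ on compacts (Dini) gives uniform convergence of the finite sum $\Delta p_r(\tau,x,y)\to\Delta p(\tau,x,y)$ on $[s,t]$, so the limit integral formula holds, which in turn implies $p$ is $C^1$ in $t$ with $\partial_t p=\Delta p$. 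I expect this interchange of limits and derivative to be the main technical point, since one must avoid relying on derivative convergence directly.

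Finally, for (vi), given any exhaustion $\{U_r\}$ of $V$, connectedness gives $R(r),R'(r)$ with $B_{R(r)}\subset U_r\subset B_{R'(r)}$ and $R(r)\to\infty$. The corresponding Dirichlet heat kernels satisfy $p_{R(r)}\le p_{U_r}\le p_{R'(r)}$ by the monotonicity stated before Proposition 2.1, and since both outer quantities tend to $p$ defined via $B_r$, the middle term has the same limit. This shows independence of the exhaustion and closes the proposition.
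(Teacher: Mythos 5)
The paper does not prove this proposition at all: it is transcribed verbatim from the references [AW, RW] (Weber; Wojciechowski), with the construction-by-exhaustion and Dini's theorem mentioned only in the surrounding text. So there is no in-paper argument to compare against, and your proposal should be judged as a self-contained reconstruction. On that score it is essentially correct and follows the standard route of those references: monotone limits for (ii)--(iv) and (vi), the integrated equation $p_r(t)-p_r(s)=\int_s^t\Delta p_r(\tau)\,d\tau$ combined with locality of $\Delta$ and Dini's uniform convergence for (i), and a splitting of the sum for (v). Two spots deserve slightly more care. First, in (iii) your path argument only yields positivity when $t/k$ lies below the small-time threshold for each consecutive pair; for large $t$ you still need to propagate positivity forward in time, e.g.\ via $p(t,x,y)\ge\mu(x)p(t-\varepsilon,x,x)p(\varepsilon,x,y)$ together with $p(s,x,x)\ge\mu(x)p(s/2,x,x)^2$, or by artificially lengthening the path. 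Second, in (v) the invocation of dominated convergence presupposes that the full series $\sum_z\mu(z)p(t,x,z)p(s,z,y)$ converges; since the summands $\mu(z)p_r(t,x,z)p_r(s,z,y)$ (extended by zero outside $B_r$) increase in $r$, monotone convergence gives the identity in one line, with the left side $p_r(t+s,x,y)\to p(t+s,x,y)$, and avoids the need for a dominating function. With these touch-ups the argument is complete.
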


\begin{definition}
\textnormal{A graph $G$ is stochastically complete if
$$\sum_{y\in V}\mu(y)p(t,x,y)=1$$
for any $t>0$ and $x\in V$.}
\end{definition}

The heat kernel would generate a bounded solution of the heat equation on $G$ for any bounded initial condition.
Precisely, for any bounded function $u_0\in C(V)$, the function
$$u(t,x)=\sum_{y\in V}\mu(y)p(t,x,y)u_0(y) \quad (t>0, \,\, x\in V)$$
is bounded and differentiable in $t$, satisfies $\partial_tu=\Delta u$
and $\lim_{t\rightarrow0^+}u(t,x)=u_0(x)$ for any $x\in V$.

We define
$$P_t f(x)=\sum_{y\in V}\mu(y)p(t,x,y)f(y)$$
for any bounded function $f\in C(V)$,
then $P_t f(x)$ is a bounded solution of the heat equation and $P_tf(x)$ converges uniformly in $[0,T]$ for any $T>0$ (see \cite{LW2}).

$P_t$ is called the heat semigroup of the Laplace operator.
Further, the different definitions of the heat semigroup coincide when $\Delta$ is a bounded operator, that is,
$$P_tf(x)=e^{t\Delta}f(x)=\sum_{y\in V}\mu(y)p(t,x,y)f(y).$$
In the following proposition, we transcribe some useful properties of the heat semigroup $P_t$.
\begin{proposition}[\cite{HLLY}]
\textnormal{For any bounded function $f\in C(V)$ and $t,s>0$, $x\in V$, we have\\
(i) $P_tP_sf(x)=P_{t+s}f(x)$;\\
(ii) $\Delta P_tf(x)=P_t\Delta f(x)$;\\
(iii) $\lim_{t\rightarrow 0^+}P_t f(x)=P_0f(x)=f(x)$.}
\end{proposition}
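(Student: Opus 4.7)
The plan is to exploit the fact that under the paper's standing assumption $D_\mu<\infty$, the $\mu$-Laplacian $\Delta$ is bounded on $\ell^\infty(V,\mu)$, as noted in Section 2.2. Boundedness lets us identify $P_t$ with the exponential $e^{t\Delta}=\sum_{n\geq 0}(t\Delta)^n/n!$, which converges in operator norm, and all three assertions then reduce to routine manipulations of a norm-convergent power series of a bounded operator. Parallel to this, each property can also be derived directly from the kernel representation $P_tf(x)=\sum_{y\in V}\mu(y)p(t,x,y)f(y)$ by invoking the corresponding property of $p(t,x,y)$ from the preceding propositions.

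For (i), I would write $P_tP_sf(x)=\sum_{z\in V}\mu(z)p(t,x,z)\sum_{y\in V}\mu(y)p(s,z,y)f(y)$. Since $f$ is bounded and $\sum_{y}\mu(y)p(s,z,y)\leq 1$ uniformly in $z$ by property (iv), the double sum converges absolutely and Fubini applies; the Chapman--Kolmogorov identity (property (v) of the heat kernel), $\sum_{z}\mu(z)p(t,x,z)p(s,z,y)=p(t+s,x,y)$, collapses the inner sum into $p(t+s,x,y)$, giving $P_{t+s}f(x)$. For (ii), since $\Delta$ is bounded it commutes with every $\Delta^n$, hence with $e^{t\Delta}$; equivalently, using property (i) of the heat kernel one has $\Delta_x p(t,x,y)=\Delta_y p(t,x,y)$ (note symmetry $p(t,x,y)=p(t,y,x)$), so
\[
\Delta P_tf(x)=\sum_{y\in V}\mu(y)\Delta_x p(t,x,y)f(y)=\sum_{y\in V}\mu(y)\Delta_y p(t,x,y)f(y)=P_t\Delta f(x),
\]
where the last equality is a discrete integration by parts using $\omega_{xy}=\omega_{yx}$ and absolute convergence. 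The interchange of $\Delta$ with the defining sum is harmless because, at a fixed $x$, $\Delta$ involves only the finitely many neighbors of $x$.

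For (iii), the cleanest route is the uniform bound
\[
|P_tf(x)-f(x)|\leq \sum_{n\geq 1}\frac{t^n\|\Delta\|^n}{n!}\|f\|_\infty\leq \bigl(e^{t\|\Delta\|}-1\bigr)\|f\|_\infty \longrightarrow 0 \quad (t\to 0^+),
\]
which gives norm (hence pointwise) convergence $P_tf\to f$. Alternatively, one can argue from the kernel side: $p(t,x,y)\to \delta_x(y)/\mu(y)$ as $t\to 0^+$ (consistent with $p_r(0,x,y)=\delta_x(y)/\mu(y)$ and the uniform convergence of the exhaustion limit on compact $t$-intervals from Definition~2.2), and $f$ bounded together with property (iv) permits passing the limit inside the sum. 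I do not anticipate a genuine analytical obstacle in any of the three parts; the only technical care needed is justifying the interchange of $\Delta$ or $\lim_{t\to 0^+}$ with the infinite sum defining $P_t$, and both reduce to the boundedness of $\Delta$ together with the sub-Markov bound $\sum_{y}\mu(y)p(t,x,y)\leq 1$.
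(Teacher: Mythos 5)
Your proof is essentially correct, but note that the paper itself offers no proof of this proposition: it is transcribed verbatim from the reference [HLLY] (``In the following proposition, we transcribe some useful properties of the heat semigroup $P_t$''), so there is no in-paper argument to compare against. What you have done is supply a self-contained verification from the ingredients the paper does provide, namely the kernel properties in the preceding propositions and the identification $P_t=e^{t\Delta}$ for bounded $\Delta$; this is a legitimate and arguably more useful route for a reader, since it makes the proposition independent of the external citation. Two caveats. First, all three parts genuinely need the standing hypothesis $D_\mu<\infty$ (equivalently, boundedness of $\Delta$ on $\ell^\infty(V,\mu)$): the proposition as printed omits it, but the identity $P_t=e^{t\Delta}$ and your norm estimates in (ii) and (iii) require it, so you are right to invoke it explicitly, and it is indeed assumed in every theorem where the proposition is used. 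Second, your kernel-side alternative for (iii) is the one loose spot: passing $\lim_{t\to 0^+}$ through the infinite sum $\sum_y\mu(y)p(t,x,y)f(y)$ by dominated convergence requires a dominating function uniform in $t$ near $0$, which the sub-Markov bound $\sum_y\mu(y)p(t,x,y)\leq 1$ alone does not furnish (and the paper's unnormalized $\delta_x$ in the definition of $p_r$ would literally give $P_0f(x)=\mu(x)f(x)$, a notational wrinkle you silently and correctly repair). Since your primary argument for (iii) is the operator-norm estimate $\|P_tf-f\|_\infty\leq(e^{t\|\Delta\|}-1)\|f\|_\infty$, which is airtight, this does not affect the validity of the proof; I would simply drop or clearly subordinate the kernel-side alternative for (iii).
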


\subsection{Curvature dimension condition and Gaussian heat kernel estimate}

We recall the definition of two natural bilinear forms associated with the $\mu$-Laplacian.
\begin{definition}[\cite{BL}]
\textnormal{The gradient form $\Gamma$ is defined by
\begin{equation*}
\begin{split}
2\Gamma(f,g)(x)=&(\Delta(fg)-f\Delta(g)-\Delta(f)g)(x)\\
=&\frac{1}{\mu(x)}\sum_{y\sim x}\omega_{xy}(f(y)-f(x))(g(y)-g(x)), \quad\quad f,g\in C(V).
\end{split}
\end{equation*}
The iterated gradient form $\Gamma_2$ is defined by
$$2\Gamma_2(f,g)(x)=(\Delta\Gamma(f,g)-\Gamma(f,\Delta g)-\Gamma(\Delta f,g))(x), \quad\quad f,g\in C(V).$$
For simplicity, we write $\Gamma(f)=\Gamma(f,f)$ and $\Gamma_2(f)=\Gamma_2(f,f).$
}
\end{definition}

Using the bilinear forms above one can define the following exponential curvature dimension inequality.
\begin{definition}[\cite{BHLLMY}]
\textnormal{We say that a graph $G$ satisfies the exponential curvature dimension inequality $CDE'(x,n,K)$
if for any positive function $f\in C(V)$, we have
$$\Gamma_2(f)(x)-\Gamma\Bigg(f,\frac{\Gamma(f)}{f}\Bigg)(x)\geq \frac{1}{n}f(x)^2(\Delta \log f)(x)^2+K\Gamma(f)(x).$$
We say that $CDE'(n,K)$ is satisfied if $CDE'(x,n,K)$ is satisfied for all $x\in V$.}
\end{definition}

With the help of curvature dimension condition $CDE'(n,0)$, Horn et al.\cite{HLLY} derived the Gaussian heat kernel estimate, as follows:
\begin{proposition}
\label{estimate}
\textnormal{Suppose that $D_\mu, D_\omega<\infty$ and $G$ satisfies $CDE'(n,0)$.
Then there exists a positive constant $c_1$
such that, for any $x,y \in V$ and $t>0$,
\begin{equation*}
p(t,x,y)\leq \frac{c_1}{\mathcal{V}(x,\sqrt{t})}.
\end{equation*}
Furthermore, for any $t_0>0$, there exist positive constants $c_2$ and $c_3$ such that
\begin{equation*}
p(t,x,y)\geq \frac{c_2}{\mathcal{V}(x,\sqrt{t})}\exp\left(-c_3\frac{d^2(x,y)}{t}\right)
\end{equation*}
for all $x,y\in V$ and $t>t_0$, where $c_1$ depends on $n,D_\omega,D_\mu$ and is denoted by $c_1=c_1(n,D_\omega,D_\mu)$,
$c_2$ depends on $n$ and is denoted by $c_2=c_2(n)$,\,
$c_3$ depends on $n,D_\omega,D_\mu$ and is denoted by $c_3=c_3(n,D_\omega,D_\mu)$.
In particular, for any $t_0>0$, we have
\begin{equation*}
p(t,x,x)\geq \frac{c_2(n)}{\mathcal{V}(x,\sqrt{t})}
\end{equation*}
for all $x\in V$ and $t>t_0$,
where $c_2(n)$ is positive.}
\end{proposition}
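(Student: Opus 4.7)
The plan is to exploit the uniform positive lower bound $\inf_{x\in V}a(x)=a_0>0$, together with stochastic completeness of $G$, to reduce finite-time blow-up to a scalar ODE comparison. Stochastic completeness is automatic under the standing hypothesis $D_\mu<\infty$: the boundedness of $\Delta$ on $\ell^\infty(V,\mu)$ gives uniqueness of bounded solutions of $\partial_t v=\Delta v$, and since $v\equiv 1$ is such a bounded solution with $v(0,\cdot)\equiv 1$, this forces $P_t 1\equiv 1$. It is worth flagging up front that the strategy below will not really use $\alpha\le 2/m$ or the detailed shape of (c.2); these hypotheses are inherited from the paper's standing framework (in particular from the Duhamel representation in Section 3 and the Gaussian bounds), rather than driving the blow-up argument itself.

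Starting from the integral representation furnished by Section 3,
\[
u(t,x)=P_t a(x)+\int_0^t P_{t-s}\bigl(u^{1+\alpha}(s,\cdot)\bigr)(x)\,ds,
\]
one first combines $a\ge a_0$ with $P_t 1\equiv 1$ to obtain $P_t a(x)\ge a_0$, hence $u(t,x)\ge a_0$ throughout the maximal interval of existence. Then I would iterate: set $v_0(t):=a_0$ and $v_{k+1}(t):=a_0+\int_0^t v_k(s)^{1+\alpha}\,ds$. If $u(s,\cdot)\ge v_k(s)$ uniformly in $y$, then
\[
P_{t-s}\bigl(u^{1+\alpha}(s,\cdot)\bigr)(x)\ge v_k(s)^{1+\alpha}\,P_{t-s}1(x)=v_k(s)^{1+\alpha},
\]
so $u(t,x)\ge v_{k+1}(t)$. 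The sequence $\{v_k\}$ is monotone increasing and converges to the unique solution $f$ of the scalar Cauchy problem $f'=f^{1+\alpha}$, $f(0)=a_0$, i.e.\ $f(t)=(a_0^{-\alpha}-\alpha t)^{-1/\alpha}$, which is finite only on $[0,T^*)$ with $T^*=1/(\alpha a_0^{\alpha})$. Consequently $u(t,x)\ge f(t)\to\infty$ as $t\uparrow T^*$, and $u$ must blow up no later than $T^*$.

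The main technical points are both modest: (i) a clean deduction of stochastic completeness from the boundedness of $\Delta$, which is standard once one quotes uniqueness of bounded solutions of the linear heat equation; and (ii) the monotone passage $v_k\uparrow f$ on $[0,T^*)$, which follows from monotone convergence together with uniqueness for the scalar ODE. Compared with Theorem~\ref{theorem2}, the stronger hypothesis $\inf_{V}a=a_0>0$ bypasses the delicate use of the Gaussian lower bound on $\mathcal V(x_0,\sqrt t)$ that is essential in the critical-exponent case for general non-trivial initial data; this is precisely why only the weaker volume condition (c.2), with its logarithmic corrections, is needed here.
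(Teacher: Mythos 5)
Your proposal does not address the statement at issue. The statement is Proposition~\ref{estimate}, the two-sided Gaussian heat kernel estimate: an upper bound $p(t,x,y)\leq c_1/\mathcal{V}(x,\sqrt{t})$ for all $t>0$ and a matching lower bound with Gaussian factor $\exp(-c_3 d^2(x,y)/t)$ for $t>t_0$, valid on any graph with $D_\mu,D_\omega<\infty$ satisfying $CDE'(n,0)$. What you have written is instead a blow-up argument for the semilinear equation \eqref{E:1.1} under the hypothesis $\inf_{x\in V}a(x)=a_0>0$ (essentially a proof sketch in the direction of Theorem~\ref{theorem3}, via Duhamel's formula, stochastic completeness, and comparison with the scalar ODE $f'=f^{1+\alpha}$). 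Nothing in your argument produces, or even engages with, an estimate on $p(t,x,y)$ in terms of the volume $\mathcal{V}(x,\sqrt{t})$; the heat kernel bounds are an input to the blow-up theorems, not a consequence of them.

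For the record, the paper does not prove Proposition~\ref{estimate} either: it is transcribed verbatim from Horn, Lin, Liu and Yau \cite{HLLY}, where it is obtained from the discrete Li--Yau-type gradient estimate under the exponential curvature condition $CDE'(n,0)$, together with volume doubling and a parabolic Harnack inequality. Any genuine proof would have to go through that machinery (or an equivalent route to on-diagonal upper bounds and Gaussian lower bounds); a Jensen-inequality/ODE comparison argument for the nonlinear problem cannot substitute for it. If your intention was to prove Theorem~\ref{theorem3}, note additionally that your sketch discards the hypotheses $0<\alpha\leq 2/m$ and the volume condition (c.2) entirely, whereas the paper's proof of that theorem uses both (via Lemma~\ref{lemma:lemma5.2} and the upper heat kernel bound) to derive the contradiction; an ODE comparison from $u\geq a_0>0$ alone would prove blow-up for \emph{every} $\alpha>0$ on \emph{every} stochastically complete graph, which is a much stronger claim than the theorem asserts and should make you suspicious of the step where you pass from $u\geq v_k$ to $u\geq v_{k+1}$ on the whole maximal interval without verifying that $u$ remains a bounded solution there.
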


\section{Auxiliary result}


In order to prove our main results, we need an auxiliary result, which provides a representation to the solution of \eqref{E:1.1} via
an integral equation with the heat kernel. Firstly,
we introduce the definition of the solution for equation \eqref{E:1.1} on graphs.

\begin{definition}
\label{definition:def1.1}
\textnormal
{Let $T$ be a positive number. A non-negative function $u=u(t,x)$ is said to be a non-negative solution of \eqref{E:1.1} in $[0,T]$,
if $u$ is continuous with respect to $t$ and satisfies \eqref{E:1.1} in $[0,T]\times V$.}
\end{definition}

Let $T_\infty$ denote the supremum of all $T$ satisfying the condition that
for any $T'<T$ the solution $u(t,x)$ of \eqref{E:1.1} is bounded in $[0,T']\times V$.

\begin{definition}
\label{definition:def1.2}
\textnormal
{A solution $u(t,x)$ of \eqref{E:1.1} is said to blow up in a finite time, provided that $T_\infty<\infty$.
A solution $u(t,x)$ of \eqref{E:1.1} is global if $T_\infty=\infty$.}
\end{definition}


Next, we employ the heat kernel to represent the solution of \eqref{E:1.1}.

\begin{theorem}
\label{theorem1}
\textnormal{Suppose that $D_\mu<\infty$.
For any $T>0$, the bounded and non-negative solution $u$ of \eqref{E:1.1} satisfies
\begin{equation}
\label{th31}
u(t,x)=P_t a(x)+\int_0^t P_{t-s}u(s,x)^{1+\alpha}ds \quad\quad
(0<t<T, \; x\in V).
\end{equation}
}
\end{theorem}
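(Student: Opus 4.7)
\bigskip
\noindent\textbf{Proof proposal for Theorem \ref{theorem1}.}

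The plan is to prove the Duhamel formula via the variation of parameters method adapted to the discrete setting. Fix $T>0$, let $u$ be a bounded non-negative solution of \eqref{E:1.1} on $[0,T]\times V$, and fix an arbitrary $t\in(0,T)$ and $x\in V$. The key auxiliary object is
\begin{equation*}
\Phi(s):=P_{t-s}u(s,\cdot)(x)=\sum_{y\in V}\mu(y)\,p(t-s,x,y)\,u(s,y),\qquad s\in[0,t].
\end{equation*}
Since $u(0,y)=a(y)$ and $P_0=\mathrm{Id}$, one immediately reads off $\Phi(0)=P_ta(x)$ and $\Phi(t)=u(t,x)$. If we can show that $\Phi$ is differentiable on $(0,t)$ with
\begin{equation*}
\Phi'(s)=P_{t-s}\bigl(u(s,\cdot)^{1+\alpha}\bigr)(x),
\end{equation*}
then integrating from $0$ to $t$ gives exactly \eqref{th31}.

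The computation of $\Phi'(s)$ is where the bounded-Laplacian hypothesis $D_\mu<\infty$ enters decisively. Because $\Delta$ is a bounded operator on $\ell^\infty(V,\mu)$, the heat semigroup $P_\tau=e^{\tau\Delta}$ is $C^1$ in $\tau$ in the operator norm, commutes with $\Delta$, and satisfies $\frac{d}{d\tau}P_\tau f=\Delta P_\tau f=P_\tau\Delta f$ for every bounded $f$. Using this together with the PDE $\partial_s u(s,\cdot)=\Delta u(s,\cdot)+u(s,\cdot)^{1+\alpha}$, a formal product-rule computation gives
\begin{equation*}
\Phi'(s)=-P_{t-s}\Delta u(s,\cdot)(x)+P_{t-s}\bigl(\Delta u(s,\cdot)+u(s,\cdot)^{1+\alpha}\bigr)(x)=P_{t-s}\bigl(u(s,\cdot)^{1+\alpha}\bigr)(x),
\end{equation*}
as desired. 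The boundedness of $u$ on $[0,T]\times V$ makes all quantities appearing in the argument bounded functions on $V$, so every application of $P_{t-s}$ and $\Delta$ is well-defined in $\ell^\infty$.

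The main technical obstacle is the rigorous justification of the differentiation of $\Phi$, because $\Phi(s)$ is defined through an infinite sum over $V$ and both the kernel $p(t-s,x,y)$ and the argument $u(s,y)$ depend on $s$. The plan here is to split $\Phi(s+h)-\Phi(s)$ into two pieces, one capturing the variation of the semigroup (handled by $\frac{d}{d\tau}P_\tau=P_\tau\Delta$ with uniform operator-norm convergence as $h\to 0$) and one capturing the variation of $u$ in time (handled by continuity of $s\mapsto u(s,\cdot)$ together with the equation, which gives $s\mapsto u(s,\cdot)$ in fact Lipschitz in the $\ell^\infty$ norm since $\Delta u+u^{1+\alpha}$ is uniformly bounded on $[0,T]\times V$). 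The bound $D_\mu<\infty$, which is exactly the equivalent condition for the $\mu$-Laplacian to act boundedly on $\ell^\infty(V,\mu)$, ensures that both pieces can be passed to the limit uniformly in $y$, legitimizing the interchange of derivative and the infinite sum. Once $\Phi'(s)=P_{t-s}(u^{1+\alpha}(s,\cdot))(x)$ is established and $s\mapsto\Phi'(s)$ is seen to be continuous on $(0,t)$ and bounded up to the endpoints, integrating from $0$ to $t$ yields \eqref{th31}.
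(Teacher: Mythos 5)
Your proposal is correct, but it proves the Duhamel formula by a genuinely different route than the paper. You differentiate the interpolating quantity $\Phi(s)=P_{t-s}u(s,\cdot)(x)$ and integrate $\Phi'(s)=P_{t-s}\bigl(u(s,\cdot)^{1+\alpha}\bigr)(x)$ from $0$ to $t$; the hypothesis $D_\mu<\infty$ enters through the boundedness of $\Delta$ on $\ell^\infty(V,\mu)$, which legitimizes the norm-differentiability of $P_\tau$ and the interchange of $d/ds$ with the infinite sum. The paper instead constructs $v(t,x)=P_ta(x)+\int_0^tP_{t-s}g(s,x)\,ds$ as a solution of the inhomogeneous linear Cauchy problem with $g=u^{1+\alpha}$, and then identifies $v$ with $u$ by a uniqueness argument: $D_\mu<\infty$ implies stochastic completeness (Lemma \ref{lemma:lemma3.2}), which by Lemma \ref{lemma:lemma3.1} forces the homogeneous problem with zero initial data to have only the zero bounded solution. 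The two uses of $D_\mu<\infty$ are therefore quite different. The paper's route only ever differentiates $\int_0^tP_{t-s}g(s,x)\,ds$ for a fixed bounded $g$, so it never has to push a time derivative of $u$ through the semigroup; your route avoids the stochastic-completeness machinery entirely but must carefully justify the term-by-term differentiation of $\sum_y\mu(y)p(t-s,x,y)u(s,y)$. Your plan for that justification is sound: splitting the increment into a semigroup part and a $u$-part, using the uniform Lipschitz bound $\|u(s+h,\cdot)-u(s,\cdot)\|_\infty\le Ch$ (which follows since $\Delta u+u^{1+\alpha}$ is uniformly bounded) together with $\sum_y\mu(y)p(\tau,x,y)\le 1$ for dominated convergence does close the argument. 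To be fully rigorous you should also record that $\Phi$ is continuous up to the endpoints (at $s=t$ this uses $\lim_{\tau\to0^+}P_\tau f=f$, Proposition 2.3) before invoking the fundamental theorem of calculus, but this is a routine addition.
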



\begin{lemma}[\cite{g1,RW}]
\label{lemma:lemma3.1}
\textnormal{The following statements are equivalent:\\
(i) The graph $G$ is stochastically incomplete;\\
(ii) There exists a non-zero, bounded function $u$ satisfying
\begin{equation}
\label{E:3.0}
\left\{
\begin{array}{lc}
\partial_tu=\Delta u, &\, \text{$(t,x) \in$ $(0,T)\times V$},\\
u(0,x)=0, &\, \text{$x \in$ $V$}.
\end{array}
\right.
\end{equation}
}
\end{lemma}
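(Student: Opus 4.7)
The plan is to establish the equivalence by proving the two directions separately, using an explicit ``defect function'' construction for (i)$\Rightarrow$(ii) and a maximum-principle/exhaustion argument for (ii)$\Rightarrow$(i). The canonical object linking the two sides is the heat-semigroup action on the constant function $1$: stochastic completeness is the statement $P_t 1 \equiv 1$, so the failure of this identity is precisely what produces a nontrivial solution of the homogeneous Cauchy problem with zero initial data.

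For (i)$\Rightarrow$(ii), I would take the natural candidate
\[ u(t,x) := 1 - P_t 1(x) = 1 - \sum_{y\in V}\mu(y)\,p(t,x,y). \]
By property (iv) of the heat kernel on $V$, $0 \le u(t,x) \le 1$, so $u$ is bounded. Stochastic incompleteness gives $(t_0,x_0)$ with $\sum_y \mu(y)p(t_0,x_0,y) < 1$, so $u(t_0,x_0) > 0$ and $u \not\equiv 0$. The initial condition $u(0,x) = 0$ follows from $P_t 1(x) \to 1$ as $t \to 0^+$, which itself comes from $p(t,x,y) \to \delta_{xy}/\mu(x)$ combined with a monotone-convergence argument along the exhaustion defining $p(t,x,y)$. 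Finally, since $\Delta$ annihilates constants and $P_t 1$ is a bounded solution of the heat equation (by the semigroup properties recalled in the previous subsection), $\partial_t u = -\partial_t P_t 1 = -\Delta P_t 1 = \Delta(1 - P_t 1) = \Delta u$, so $u$ solves (3.0).

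For (ii)$\Rightarrow$(i), I would argue by contrapositive: assume $G$ is stochastically complete, so $P_t 1 \equiv 1$, and let $u$ be any bounded solution of (3.0) with $|u|\le M$. Set $v^{\pm}(t,x) := M \pm u(t,x)\ge 0$; each $v^{\pm}$ is a bounded non-negative solution of the heat equation with initial datum $M$. The aim is to show $v^{\pm}\equiv M$, which forces $u\equiv 0$. For this I would approximate by the Dirichlet heat kernels $p_r$ on the exhaustion $\{B_r\}$ of Section~2.3: writing a Duhamel-type representation for $v^{\pm}$ on $B_r$ with Dirichlet data inherited from $v^{\pm}$ on $\partial B_r$ produces a main term $\sum_{y\in B_r}\mu(y)p_r(t,x,y)\,M$ plus a boundary flux. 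Letting $r\to\infty$, the main term tends to $M\cdot P_t 1(x) = M$ by stochastic completeness, while the boundary flux is controlled by $M\bigl(1-\sum_{y\in B_r}\mu(y)p_r(t,x,y)\bigr)$, which vanishes in the limit again because $P_t 1 = 1$. This squeezes $v^{\pm} \equiv M$, so $u \equiv 0$, contradicting (ii).

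The main obstacle will be the clean execution of this passage to the limit in the reverse direction. Uniqueness of bounded solutions of the heat equation on an infinite graph is a subtle matter: one must rigorously identify the boundary-flux term in the Dirichlet approximation and prove that its disappearance is equivalent to stochastic completeness, rather than a weaker consequence of it. This is precisely the argument worked out in \cite{g1,RW}; I would follow their exhaustion framework closely and rely on Proposition~2.1 and the monotone-convergence definition of $p(t,x,y)$ to justify interchanging the limit with summation against $\mu(y)$.
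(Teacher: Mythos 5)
The paper does not prove this lemma at all: it is imported verbatim from the cited references \cite{g1,RW}, so there is no in-paper argument to compare against. Your reconstruction is the standard proof from those references and is sound in both directions. For (i)$\Rightarrow$(ii), the choice $u=1-P_t1$ is exactly right, and every property you need of $P_t1$ (boundedness via Proposition 2.2(iv), the fact that it solves the heat equation, and $P_t1\to1$ as $t\to0^+$) is already recorded in Section 2.3 of the paper, so this direction is complete modulo those stated facts. For (ii)$\Rightarrow$(i), the contrapositive via $v^{\pm}=M\pm u$ and the Dirichlet exhaustion is the right idea; the only place you make life harder than necessary is the ``exact Duhamel representation plus boundary flux'' formulation. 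It suffices to prove the one-sided comparison
\[
v^{\pm}(t,x)\;\geq\;\sum_{y\in B_r}\mu(y)\,p_r(t,x,y)\,v^{\pm}(0,y)\;=\;M\sum_{y\in B_r}\mu(y)\,p_r(t,x,y),
\]
which follows from the parabolic minimum principle on the finite ball $B_r$ applied to the difference of the two sides (non-negative on $\partial B_r\times[0,T]$ since $p_r$ vanishes there and $v^{\pm}\geq0$, and zero at $t=0$ on $\mathring{B_r}$); letting $r\to\infty$ and using the monotone convergence $p_r\uparrow p$ together with $P_t1\equiv1$ yields $v^{\pm}\geq M$, hence $u\equiv0$, with no need to identify or control a boundary term. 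With that simplification the passage to the limit you flag as the main obstacle is just monotone convergence, and the argument closes.
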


\begin{lemma}[\cite{KL}]
\label{lemma:lemma3.2}
\textnormal{The graph $G$ is stochastically complete if $D_\mu<\infty$.}
\end{lemma}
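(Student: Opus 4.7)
The plan is to prove the contrapositive using Lemma \ref{lemma:lemma3.1}: if $G$ is stochastically incomplete, then there exists a bounded, non-zero $u:[0,T]\times V\to\mathbb{R}$ with $\partial_t u=\Delta u$ and $u(0,\cdot)=0$. I will show that when $D_\mu<\infty$, such a $u$ is necessarily identically zero, producing the required contradiction. The underlying principle is that $D_\mu<\infty$ forces $\Delta$ to be a bounded operator on $\ell^\infty(V,\mu)$, so the bounded Cauchy problem for $\partial_t u=\Delta u$ has at most one solution.

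First, I would record the operator bound. From the pointwise definition,
\begin{equation*}
|\Delta f(x)|\leq \frac{1}{\mu(x)}\sum_{y\sim x}\omega_{xy}\bigl(|f(y)|+|f(x)|\bigr)\leq \frac{2m(x)}{\mu(x)}\|f\|_\infty\leq 2D_\mu\|f\|_\infty,
\end{equation*}
so $\|\Delta\|_{\ell^\infty\to\ell^\infty}\leq 2D_\mu$ (the same fact is cited after the definition of $\Delta$ via \cite{HAESELER}). Next, fix $x\in V$ and integrate the ODE $\partial_s u(s,x)=\Delta u(s,x)$ from $0$ to $t$. The sum defining $\Delta u(s,x)$ is finite by local finiteness, hence continuous in $s$, so the fundamental theorem of calculus gives
\begin{equation*}
u(t,x)=u(0,x)+\int_0^t\Delta u(s,x)\,ds=\int_0^t\Delta u(s,x)\,ds.
\end{equation*}

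Now set $M(t):=\sup_{x\in V}|u(t,x)|$. This quantity is finite by assumption and, being a supremum of functions continuous in $t$, is measurable. Taking absolute values and then the supremum over $x$ in the identity above yields
\begin{equation*}
M(t)\leq \int_0^t\sup_{x\in V}|\Delta u(s,x)|\,ds\leq 2D_\mu\int_0^t M(s)\,ds.
\end{equation*}
Since $M$ is bounded on $[0,T]$, iterating this inequality gives $M(t)\leq (2D_\mu)^n\|M\|_\infty t^n/n!$ for every $n\geq 0$, so $M\equiv 0$ on $[0,T]$; a direct appeal to Gronwall works equally well. This contradicts the fact that $u$ is non-zero, completing the proof.

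I do not expect any genuinely hard step here. The only mild subtleties are (i) justifying that $\Delta$ is bounded on $\ell^\infty$ from the hypothesis $D_\mu<\infty$, which is handled by the elementary estimate above, and (ii) ensuring that one may differentiate/integrate termwise, which is immediate because the Laplacian at each vertex is a finite sum. The role of Lemma \ref{lemma:lemma3.1} is to convert stochastic completeness into a uniqueness statement for the bounded Cauchy problem, and that uniqueness is precisely what boundedness of $\Delta$ delivers.
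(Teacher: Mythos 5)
Your argument is correct, but it is worth pointing out that the paper offers no proof of this lemma at all: it is quoted verbatim from Keller--Lenz \cite{KL}, where stochastic completeness is obtained through the theory of Dirichlet forms (the standard criterion there being that incompleteness is equivalent to the existence of a non-trivial bounded $\lambda$-harmonic function, which is ruled out for large $\lambda$ once $\Delta$ is a bounded operator). What you have written is a self-contained and more elementary alternative: you combine Lemma \ref{lemma:lemma3.1} with the observation that $D_\mu<\infty$ gives $\|\Delta\|_{\ell^\infty\to\ell^\infty}\leq 2D_\mu$ (the bound $|\Delta f(x)|\leq 2\frac{m(x)}{\mu(x)}\|f\|_\infty$ is exactly right, and is the same fact the paper attributes to \cite{HAESELER}), and then run a Gr\"onwall iteration on $M(t)=\sup_x|u(t,x)|$ to force any bounded solution with zero initial data to vanish. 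The minor technical points are all in order: $\Delta u(s,x)$ is a finite sum by local finiteness, hence continuous in $s$, so the fundamental theorem of calculus applies; $M$ is a countable supremum of continuous functions, hence measurable and bounded by $\|u\|_\infty$; and the iterated bound $M(t)\leq (2D_\mu)^n\|M\|_\infty t^n/n!$ does give $M\equiv 0$. What your route buys is independence from the Dirichlet-form machinery of \cite{KL}, at the cost of relying on the equivalence in Lemma \ref{lemma:lemma3.1}, which the paper in turn imports from \cite{g1,RW}; what the citation to \cite{KL} buys the author is a statement valid in the general weighted-graph framework without re-deriving uniqueness. Either way, the lemma stands.
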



\begin{proof}[Proof of Theorem \ref{theorem1}.]
We consider the following Cauchy problem
\begin{equation}
\label{E:3.1}
\left\{
\begin{array}{lc}
\partial_t v=\Delta v + g(t,x), &\, \text{$(t,x) \in$ $(0,T)\times V$,}\\
v(0,x)=a(x), &\, \text{$x \in$ $V$,}
\end{array}
\right.
\end{equation}
where $g$ is bounded and continuous with respect to $t$ in $[0,T)$.

Set
$$v_1(t,x)=\int_0^t P_{t-s}g(s,x)ds \quad\quad (0\leq t<T, \; x\in V).$$

Since $P_{t-s}g(s,x)$ and $\partial_tP_{t-s}g(s,x)$ are continuous with respect to $t$ and $s$.
Differentiating $v_1(t,x)$ with respect to $t$ gives
\begin{equation*}
\begin{split}
\partial_tv_1(t,x)&=\int_0^{t}\partial_t P_{t-s}g(s,x)ds+P_0 g(t,x)\\
&=\int_0^{t} \Delta P_{t-s}g(s,x)ds+P_0 g(t,x)\\
&=\Delta v_1(t,x)+g(t,x).
\end{split}
\end{equation*}

Letting $v(t,x)=P_t a(x)+v_1(t,x)$, it follows that for any $t\in (0,T)$ and $x\in V$,
\begin{equation*}
\begin{split}
\partial_t v(t,x)=\partial_tP_t a(x)+\partial_tv_1(t,x)=\Delta P_t a(x)+\Delta v_1(t,x)+g(t,x)=\Delta v(t,x)+g(t,x),
\end{split}
\end{equation*}
and $v(0,x)=a(x)$.
Thus, $v(t,x)$ is a solution of \eqref{E:3.1}.

On the other hand, by the assumption $D_\mu<\infty$ in Theorem \ref{theorem1}, we deduce from
Lemma \ref{lemma:lemma3.1} and Lemma \ref{lemma:lemma3.2} that
the equation \eqref{E:3.0} has only zero solution,
which implies that the solution of \eqref{E:3.1} is unique
and satisfies
\begin{equation}
\label{E:3.13}
v(t,x)=P_t a(x)+v_1(t,x)=P_t a(x)+\int_0^t P_{t-s}g(s,x)ds \quad\quad (0<t<T, \; x\in V).
\end{equation}

For an arbitrary bounded and non-negative solution $u(t,x)$ of \eqref{E:1.1},
we take $g(t,x)=u(t,x)^{1+\alpha}$ in \eqref{E:3.1}.
By \eqref{E:3.13}, we deduce that
$P_t a(x)+\int_0^t P_{t-s}u(s,x)^{1+\alpha}ds$
is the unique solution of \eqref{E:3.1} for $g(t,x)=u(t,x)^{1+\alpha}$.

In view of $\partial_t u=\Delta u+u^{1+\alpha}$, so $u(t,x)$ is a solution of \eqref{E:3.1} for $g(t,x)=u(t,x)^{1+\alpha}$.
Therefore, we have for any $(t,x)\in (0,T)\times V$,
\begin{equation*}
\begin{split}
u(t,x)&=P_t a(x)+\int_0^t P_{t-s}u(s,x)^{1+\alpha}ds.
\end{split}
\end{equation*}

This completes the proof of Theorem \ref{theorem1}.
\end{proof}

\section{Proof of Theorem \ref{theorem2}}

Since $a(x)$ is non-trivial in $V$, we may assume $a(x_0)>0$ with $x_0\in V$.
In what follows we fix the vertex $x_0\in V$ and let $B_r=B_r(x_0)$.

We first prove two lemmas below.


\begin{lemma}
\label{lemma:lemma4.1}
\textnormal{Suppose that $D_\mu<\infty$. If the equation \eqref{E:1.1} has a non-negative global solution $u(t,x)$,
then there exists a constant $C'$ such that
$t^{\frac{1}{\alpha}}P_t a(x)\leq C'$ for any $t>0$ and $x\in V$.}
\end{lemma}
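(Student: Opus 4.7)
The plan is to extract the desired uniform bound on $t^{1/\alpha}P_ta(x)$ by iterating the Duhamel representation of Theorem \ref{theorem1} against itself. Because $D_\mu<\infty$, Lemma \ref{lemma:lemma3.2} gives stochastic completeness, so $P_t\mathbf{1}=1$ and $P_t$ is a bona fide averaging operator; Jensen's inequality then yields $P_t(\phi\circ f)\ge \phi(P_tf)$ for every convex $\phi$ and every bounded non-negative $f$. The integral equation \eqref{th31} already provides the baseline $u(t,x)\ge P_ta(x)$, and this is the seed of the iteration.

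Next I would prove by induction on $k\ge 0$ that
\begin{equation*}
u(t,x)\ge C_k\,t^{\beta_k}\bigl(P_ta(x)\bigr)^{\gamma_k},\qquad \gamma_k=(1+\alpha)^k,\quad \beta_k=\frac{\gamma_k-1}{\alpha},
\end{equation*}
for a positive sequence $C_k$ with $C_0=1$ and $C_{k+1}=C_k^{1+\alpha}/\beta_{k+1}$. The inductive step substitutes the level-$k$ bound into the nonlinear term $\int_0^t P_{t-s}u(s,x)^{1+\alpha}\,ds$, applies Jensen twice (first to $y\mapsto y^{1+\alpha}$, then to $y\mapsto y^{\gamma_k}$, both convex since $\gamma_k\ge 1$), and uses the semigroup identity $P_{t-s}(P_sa)=P_ta$ to collapse the inner average into $(P_ta(x))^{\gamma_k(1+\alpha)}$. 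Performing the remaining integral $\int_0^t s^{\beta_k(1+\alpha)}\,ds$ produces the recursion $\beta_{k+1}=\beta_k(1+\alpha)+1$ and the claimed closed form.

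Rearranging, the bound reads
\begin{equation*}
t^{1/\alpha}u(t,x)\ge C_k\bigl(t^{1/\alpha}P_ta(x)\bigr)^{\gamma_k},
\end{equation*}
so the proof closes by contradiction: if at some $(t_0,x_0)$ we had $t_0^{1/\alpha}P_{t_0}a(x_0)>e^K$ for an appropriate constant $K$, then the right-hand side would tend to $\infty$ as $k\to\infty$, forcing $u(t_0,x_0)=\infty$ and contradicting the hypothesis that $u$ is a global (hence locally bounded) solution. To make this rigorous I must verify that $C_k$ decays at most like $e^{-K\gamma_k}$; taking logarithms in the $C_k$-recursion gives $\log C_{k+1}=(1+\alpha)\log C_k-\log\beta_{k+1}$, and since $\log\beta_k=O(k)$, solving the linear recursion yields $\log C_k\ge -K(1+\alpha)^k$ with $K=\sum_{j\ge 1}(1+\alpha)^{-j}\log\beta_j<\infty$. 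Setting $C':=e^K$ then completes the lemma.

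The main obstacle is precisely this constant analysis: the key inequality is a race between the doubly exponential growth of $(t^{1/\alpha}P_ta(x))^{\gamma_k}$ when $t^{1/\alpha}P_ta(x)>1$ and the decay of $C_k$, and one has to quantitatively confirm that $C_k$ decays only single-exponentially in $\gamma_k$ so that the race is won past the explicit threshold $e^K$. Everything else is a standard Fujita-type bootstrap, with stochastic completeness (supplied by $D_\mu<\infty$) playing the essential role of turning $P_t$ into a Markovian averaging semigroup and thereby unlocking Jensen's inequality.
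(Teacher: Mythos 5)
Your proposal is correct and follows essentially the same route as the paper: stochastic completeness (via $D_\mu<\infty$) enables Jensen's inequality, the Duhamel formula of Theorem \ref{theorem1} is bootstrapped $k$ times to get $u(t,x)\geq C_k t^{\beta_k}(P_ta(x))^{(1+\alpha)^k}$ with exactly the recursion $\beta_{k+1}=(1+\alpha)\beta_k+1$ and $C_{k+1}=C_k^{1+\alpha}/\beta_{k+1}$, and the convergent sum $\sum_j (1+\alpha)^{-j}\log\beta_j$ yields the uniform constant. The paper extracts $C'=\prod_{i\ge 2}\bigl(\tfrac{(1+\alpha)^i-1}{\alpha}\bigr)^{(1+\alpha)^{-i}}$ by taking $(1+\alpha)^k$-th roots and letting $k\to\infty$ rather than by your contradiction framing, but this is the same estimate and the same constant $e^K$.
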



\begin{proof}[Proof]
From Lemma \ref{lemma:lemma3.2} it follows that the graph is stochastically complete, that is,
$$\sum_{y\in V}\mu(y)p(t,x,y)=1 \quad\quad  (t>0, \; x\in V).$$

Applying the Jensen's inequality, we obtain for any $t>s>0$ and $x\in V$,
\begin{equation}
\label{le4.1:0}
\begin{split}
P_{t-s}u(s,x)^{1+\alpha}&=\sum_{y\in V}\mu(y)p(t-s,x,y)u(s,y)^{1+\alpha}\\
&\geq \left( \sum_{y\in V}\mu(y)p(t-s,x,y)u(s,y) \right)^{1+\alpha}\\
&=\left(P_{t-s}u(s,x)\right)^{1+\alpha}.
\end{split}
\end{equation}

Note that $P_ta(x)\geq 0$, using Theorem \ref{theorem1} along with \eqref{le4.1:0}, one has
\begin{equation}
\label{le4.1:1}
u(t,x)\geq \int_0^tP_{t-s}u(s,x)^{1+\alpha}ds\geq \int_0^t \left(P_{t-s}u(s,x)\right)^{1+\alpha}ds \quad\quad (t>0, \; x\in V).
\end{equation}

Again, from Theorem \ref{theorem1} and the fact that $u$ is non-negative we get $u(t,x)\geq P_ta(x)$, which, together with \eqref{le4.1:1}, yields
\begin{equation}
\label{le4.1:2}
u(t,x)\geq \int_0^t \left(P_{t-s}P_sa(x)\right)^{1+\alpha}ds = \int_0^t \left(P_t a(x)\right)^{1+\alpha}ds
=t\left(P_t a(x)\right)^{1+\alpha} \quad (t>0, \; x\in V).
\end{equation}

Further, utilizing \eqref{le4.1:2} and the Jensen's inequality gives
\begin{equation}
\label{le4.1:3}
\begin{split}
P_{t-s}u(s,x)&\geq sP_{t-s}\left(P_s a(x)\right)^{1+\alpha}\\
&=s\sum_{y\in V}\mu(y)p(t-s,x,y)\left(P_s a(x)\right)^{1+\alpha}\\
&\geq s \left( \sum_{y\in V}\mu(y)p(t-s,x,y)\left(P_s a(x)\right)\right)^{1+\alpha}\\
&=s \left( P_{t-s}P_s a(x)\right)^{1+\alpha}\\
&=s \left( P_t a(x)\right)^{1+\alpha}.
\end{split}
\end{equation}

Substituting \eqref{le4.1:3} into \eqref{le4.1:1}, we get
\begin{equation}
\label{le4.1:4}
u(t,x)\geq \int_0^t s^{1+\alpha}\left( P_t a(x)\right)^{{(1+\alpha)}^2} ds
=\frac{1}{2+\alpha}t^{2+\alpha}\left( P_t a(x)\right)^{{(1+\alpha)}^2} \quad\quad (t>0, \; x\in V).
\end{equation}

It follows from \eqref{le4.1:4} and the Jensen's inequality that
\begin{equation}\label{le4.1:41}
\begin{split}
P_{t-s}u(s,x)&\geq\frac{1}{2+\alpha}s^{2+\alpha}P_{t-s}\left( P_s
a(x)\right)^{{(1+\alpha)}^2} \geq
\frac{1}{2+\alpha}s^{2+\alpha}\left( P_t
a(x)\right)^{{(1+\alpha)}^2}.
\end{split}
\end{equation}

Substituting \eqref{le4.1:41} into \eqref{le4.1:1}, one finds
\begin{equation*}
u(t,x)\geq
\frac{t^{1+(1+\alpha)+(1+\alpha)^2}}{(1+(1+\alpha)+(1+\alpha)^2)
(1+(1+\alpha))^{1+\alpha}}\left( P_t a(x)\right)^{{(1+\alpha)}^3}.
\end{equation*}

Repeatedly, performing the substitution in \eqref{le4.1:1} for $k$
times, we obtain that for any $t>0$ and $x\in V$,
\begin{equation*}
\begin{split}
&u(t,x)\\
\geq&\frac{t^{1+(1+\alpha)+(1+\alpha)^2+\cdots +(1+\alpha)^{k-1}}\left( P_t a(x)\right)^{{(1+\alpha)}^k}}
{\left(1+(1+\alpha)+\cdots +(1+\alpha)^{k-1}\right) \cdots\left(1+(1+\alpha)+(1+\alpha)^2\right)^{{(1+\alpha)}^{k-3}}
\left(1+(1+\alpha)\right)^{{(1+\alpha)}^{k-2}}}\\
=&\frac{t^{\frac{(1+\alpha)^k-1}{\alpha}}\left( P_t a(x)\right)^{{(1+\alpha)}^k}}
{\left( \frac{(1+\alpha)^k-1}{\alpha} \right)\cdots \left(\frac{(1+\alpha)^3-1}{\alpha} \right)^{(1+\alpha)^{k-3}}
\left(\frac{(1+\alpha)^2-1}{\alpha} \right)^{(1+\alpha)^{k-2}}}.
\end{split}
\end{equation*}

Hence,
\begin{equation*}
\begin{split}
t^{\frac{(1+\alpha)^k-1}{\alpha(1+\alpha)^k}} P_t a(x)\leq u(t,x)^{\frac{1}{(1+\alpha)^k}}
\prod_{i=2}^{k}\left( \frac{(1+\alpha)^i-1}{\alpha} \right)^{(1+\alpha)^{-i}}.
\end{split}
\end{equation*}

Taking the limit $k\rightarrow \infty$ in both sides of the above
inequalities, we arrive at
\begin{equation}
\label{le4.1:6}
\begin{split}
t^{\frac{1}{\alpha}} P_t a(x)\leq
\prod_{i=2}^{\infty}\left( \frac{(1+\alpha)^i-1}{\alpha} \right)^{(1+\alpha)^{-i}}.
\end{split}
\end{equation}

Note that
\begin{equation}
\begin{split}
\label{le4.1:7}
\prod_{i=2}^{\infty}\left(
\frac{(1+\alpha)^i-1}{\alpha} \right)^{(1+\alpha)^{-i}}
&=\exp\Bigg\{\sum_{i=2}^\infty (1+\alpha)^{-i} \log\left(
\frac{(1+\alpha)^i-1}{\alpha} \right)\Bigg\}\\
&\leq\exp\Bigg\{\sum_{i=2}^\infty \frac{\log\left( i(1+\alpha)^i
\right)}{(1+\alpha)^{i}}\Bigg\}\\
&<\infty.
\end{split}
\end{equation}

Hence the assertion of Lemma  \ref{lemma:lemma4.1} follows from
\eqref{le4.1:6} and \eqref{le4.1:7}.
\end{proof}


\begin{lemma}
\label{lemma:lemma4.2}
\textnormal{Suppose that
$\alpha=\frac{2}{m}$ and $D_\mu,D_\omega<\infty$. Let $G$ satisfy
$CDE'(n,0)$ and $\mathcal{V}(x_0,t)\leq ct^m$ for $x_0\in V$,
$t\geq r_0$, where $c, m, r_0$ are some positive constants and
$a(x_0)>0$. If the equation \eqref{E:1.1} has a non-negative
global solution $u(t,x)$, then there exists a constant $C''$ such
that
\begin{equation}
\label{4.2-e11}
\begin{split}
\sum_{y\in B_r}\mu(y)a(y)\leq C''
\end{split}
\end{equation}
for any $r>\sqrt{\frac{\rho}{c_3}}$, where $\rho=\max\{t_0, r_0^2\}$.}
\end{lemma}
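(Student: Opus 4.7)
The plan is to combine the upper bound on $P_t a$ from Lemma \ref{lemma:lemma4.1} with the two-sided Gaussian heat kernel estimate of Proposition \ref{estimate}, using the polynomial volume growth to make the factors of $r$ (resp.\ $t$) cancel.

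First, I would recall from Lemma \ref{lemma:lemma4.1} that, since $\alpha=2/m$, the existence of a global solution forces
\begin{equation*}
P_t a(x) \leq C'\, t^{-1/\alpha} = C'\, t^{-m/2}
\end{equation*}
for all $t>0$ and all $x\in V$. Applying this at $x=x_0$ with a suitably chosen $t$ depending on $r$ will produce the right-hand side of \eqref{4.2-e11}.

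Next, I would bound $P_t a(x_0)$ from below by restricting the defining sum to $B_r$:
\begin{equation*}
P_t a(x_0) \;=\; \sum_{y\in V}\mu(y)p(t,x_0,y)a(y) \;\geq\; \sum_{y\in B_r}\mu(y)p(t,x_0,y)a(y),
\end{equation*}
which is legitimate because $a\geq 0$ and $p>0$. On $B_r$ one has $d(x_0,y)\leq r$, so plugging in the lower Gaussian estimate from Proposition \ref{estimate} gives
\begin{equation*}
P_t a(x_0) \;\geq\; \frac{c_2}{\mathcal{V}(x_0,\sqrt{t})}\exp\!\left(-c_3\frac{r^2}{t}\right)\sum_{y\in B_r}\mu(y)a(y),
\end{equation*}
valid provided $t>t_0$.

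The decisive step is the choice of $t$. I would set $t=c_3 r^2$, so that the exponential factor becomes the harmless constant $e^{-1}$, and simultaneously $\sqrt{t}=\sqrt{c_3}\,r$. The constraint $r>\sqrt{\rho/c_3}$ stated in the lemma, with $\rho=\max\{t_0,r_0^2\}$, is exactly what guarantees both $t>t_0$ (needed for the lower heat kernel bound) and $\sqrt{t}\geq r_0$ (needed to apply the polynomial volume upper bound $\mathcal{V}(x_0,\sqrt{t})\leq c t^{m/2}$). Combining these with the upper bound $P_t a(x_0)\leq C'(c_3 r^2)^{-m/2}$ yields
\begin{equation*}
\frac{c_2\, e^{-1}}{c\, (c_3)^{m/2}\, r^m}\sum_{y\in B_r}\mu(y)a(y) \;\leq\; \frac{C'}{(c_3)^{m/2}\, r^m},
\end{equation*}
and the factors $(c_3)^{m/2}r^m$ cancel exactly (this is where the criticality $\alpha=2/m$ is essential), leaving the uniform bound $\sum_{y\in B_r}\mu(y)a(y)\leq C''$ with $C''=C' c e/c_2$.

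The only real subtlety, and the step I would be most careful with, is verifying that the choice $t=c_3 r^2$ together with the hypothesis $r>\sqrt{\rho/c_3}$ simultaneously activates all three ingredients — the lower Gaussian bound (needing $t>t_0$), the volume upper bound (needing $\sqrt{t}\geq r_0$), and the cancellation of the $r$-dependence — while the rest of the argument is a direct computation.
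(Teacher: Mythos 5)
Your proposal is correct and follows essentially the same route as the paper: both lower-bound $P_t a(x_0)$ by restricting to $B_r$ and applying the Gaussian lower heat kernel estimate with the volume upper bound, then choose $t=c_3r^2$ and invoke Lemma \ref{lemma:lemma4.1} to cancel the $r^m$ factors, arriving at the same constant $C''=ecC'/c_2$. Your explicit check that $r>\sqrt{\rho/c_3}$ simultaneously ensures $t>t_0$ and $\sqrt{t}>r_0$ is exactly the role $\rho=\max\{t_0,r_0^2\}$ plays in the paper's argument.
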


\begin{proof}[Proof]
Using Proposition \ref{estimate}, we have for $t>t_0>0$,
\begin{equation*}
\begin{split}
t^{\frac{m}{2}}P_t a(x_0)&=t^{\frac{m}{2}}\sum_{y\in V}\mu(y)p(t,x_0,y)a(y)\\
&\geq t^{\frac{m}{2}}\sum_{y\in V}\mu(y)a(y)
\frac{c_2}{\mathcal{V}(x_0,\sqrt{t})}\exp\left(-c_3\frac{d^2(x_0,y)}{t}\right).
\end{split}
\end{equation*}
In view of $\alpha=\frac{2}{m}$ and $\mathcal{V}(x_0,t)\leq ct^m$, a simple calculation shows that for
 $t>\rho=\max\{t_0, r_0^2\}$ and $r>0$,
\begin{equation*}
\begin{split}
t^{\frac{1}{\alpha}}P_t a(x_0)&= t^{\frac{m}{2}}P_t a(x_0)\\
&\geq \frac{c_2}{c}\sum_{y\in V}\mu(y)a(y)\exp\left(-c_3\frac{d^2(x_0,y)}{t}\right)\\
&\geq \frac{c_2}{c}\exp\left(-\frac{c_3r^2}{t}\right)\sum_{y\in B_r}\mu(y)a(y).
\end{split}
\end{equation*}

Taking $t=c_3r^2$ with a straightforward application of Lemma
\ref{lemma:lemma4.1}, we deduce that for any
$r>\sqrt{\frac{\rho}{c_3}}$,
\begin{equation*}
\begin{split}
\sum_{y\in B_r}\mu(y)a(y)\leq C'',
\end{split}
\end{equation*}
where $C''=e\frac{cC'}{c_2}$. This completes the proof of Lemma 4.2.
\end{proof}


\begin{remark}
\label{remark:remark4.1} \textnormal{Note that for any $t$ the
non-negative global solution $u(t,x)$ of \eqref{E:1.1} can be
regarded as the initial value of the equation \eqref{E:1.1}, so
the assertion of Lemma \ref{lemma:lemma4.2} implies that for any
$t>0$ and $r>\sqrt{\frac{\rho}{c_3}}$},
\begin{equation}
\label{4.2-e22}
\sum_{y\in B_r}\mu(y)u(t,y)\leq C''.
\end{equation}
\end{remark}


\begin{proof}[Proof of Theorem \ref{theorem2}.]

We shall prove Theorem \ref{theorem2} by contradiction. Assume
that there exists a non-negative global solution $u(t,x)$ of
\eqref{E:1.1}.

For any given $r>0$, we define
\begin{equation*}
G(t,r)=\sum_{y\in B_r}\mu(y)p(t,x_0,y)u(t,y).
\end{equation*}

By using the upper estimate of the heat kernel in Proposition
\ref{estimate} and the condition of Theorem \ref{theorem2},
 $\mathcal{V}(x_0,t)\geq c^{-1}t^m$, one has for $t\geq r_0^2$,
\begin{equation*}
\begin{split}
G(t,r)&\leq \sum_{y\in B_r}\mu(y)u(t,y)\frac{c_1}{\mathcal{V}(x_0,\sqrt{t})}\\
&\leq cc_1t^{-\frac{m}{2}}\sum_{y\in B_r}\mu(y)u(t,y).
\end{split}
\end{equation*}

Utilizing the claim of Remark \ref{remark:remark4.1}, we derive
from the above inequality that for $t\geq r_0^2$ and
$r>\sqrt{\frac{\rho}{c_3}}$,
\begin{equation}
\label{th4.1:1-0}
\begin{split}
t^{\frac{m}{2}}G(t,r)\leq cc_1C''.
\end{split}
\end{equation}

On the other hand, since $u(s,x)$ and $a(x)$ are non-negative, it
follows from Theorem \ref{theorem1} that
\begin{equation}
\label{th4.1:1} u(t,x)\geq P_ta(x)=\sum_{y\in
V}\mu(y)p(t,x,y)a(y)\geq C_1p(t,x_0,x),
\end{equation}
where $C_1=\mu(x_0)a(x_0)$, and

\begin{equation}
\label{th4.1:2}
\begin{split}
u(t,x)&\geq \int_0^t P_{t-s}u(s,x)^{1+\alpha}ds=\int_0^t
\sum_{y\in V}\mu(y)p(t-s,x,y)u(s,y)^{1+\alpha} ds.
\end{split}
\end{equation}

Additionally, a direct application of Proposition \ref{estimate}
with $\mathcal{V}(x_0,t)\leq ct^m$ gives
\begin{equation}
\label{th4.1:3}
p(t,x_0,x)\geq \frac{c_2}{\mathcal{V}(x_0,\sqrt{t})}\exp\left( -\frac{c_3d^2(x_0,x)}{t}\right)
\geq \frac{c_2}{c}t^{-\frac{m}{2}}\exp\left( -\frac{c_3d^2(x_0,x)}{t}\right)
\end{equation}
for $t>\rho$ and $x\in V$.

Combining \eqref{th4.1:2}, \eqref{th4.1:3} and \eqref{th4.1:1}, we
deduce that for $x\in V$, $t\geq c_3\alpha r^2$ and
$r>\sqrt{\frac{\rho}{c_3\alpha}}$,
\begin{equation}\label{th4.1:31}
\begin{split}
u(t,x)&\geq C_1^{1+\alpha}\int_0^t \sum_{y\in V}\mu(y)p(t-s,x,y)p(s,x_0,y)^{1+\alpha}ds\\
&> \frac{c_2^\alpha C_1^{1+\alpha}}{c^\alpha}\int_{c_3\alpha
r^2}^t s^{-\frac{m\alpha}{2}}
\sum_{y\in V}\mu(y)p(t-s,x,y)p(s,x_0,y)\exp\left( -\frac{c_3\alpha d^2(x_0,y)}{s}\right)ds\\
&\geq \frac{c_2^\alpha C_1^{1+\alpha}}{c^\alpha}\int_{c_3\alpha r^2}^t s^{-\frac{m\alpha}{2}}
\exp\left( -\frac{c_3\alpha r^2}{s}\right)
\sum_{y\in B_r}\mu(y)p(t-s,x,y)p(s,x_0,y)ds\\
&\geq C_2\int_{c_3\alpha r^2}^t s^{-\frac{m\alpha}{2}}
\sum_{y\in B_r}\mu(y)p(t-s,x,y)p(s,x_0,y)ds\\
&\geq C_2\int_{c_3\alpha r^2}^t s^{-\frac{m\alpha}{2}}
\sum_{y\in B_r}\mu(y)p_r(t-s,x,y)p_r(s,x_0,y)ds,
\end{split}
\end{equation}
where $C_2=\frac{c_2^\alpha C_1^{1+\alpha}}{ec^\alpha}$, the last
inequality follows from $p(t,x,y)\geq p_r(t,x,y)$ which is due to
the fact that $p_{r}(t,x,y)\leq p_{r+1}(t,x,y)$ and
$p(t,x,y)=\lim_{r\rightarrow\infty} p_r(t,x,y)$.

Applying $m\alpha=2$ and $\sum_{y\in
B_r}\mu(y)p_r(t-s,x,y)p_r(s,x_0,y)=p_r(t,x_0,x)$ to
\eqref{th4.1:31} gives
\begin{equation*}
\begin{split}
u(t,x)&\geq C_2p_r(t,x_0,x)\int_{c_3\alpha r^2}^t s^{-1}ds\\
&=C_2p_r(t,x_0,x)\log\left(\frac{t}{c_3\alpha r^2}\right)
\end{split}
\end{equation*}
for $x\in V$, $t\geq c_3\alpha r^2$ and $r>\sqrt{\frac{\rho}{c_3\alpha}}$.

Hence, we deduce that for any $t\geq c_3\alpha r^2$ and
$r>\sqrt{\frac{\rho}{c_3\alpha}}$,
\begin{equation}
\label{th4.1:4}
\begin{split}
G(t,r)&\geq \sum_{y\in B_r}\mu(y)p_r(t,x_0,y)u(t,y)\\
&\geq C_2\log\left(\frac{t}{c_3\alpha r^2}\right)\sum_{y\in B_r}\mu(y)p_r(t,x_0,y)p_r(t,x_0,y)\\
&=C_2\log\left(\frac{t}{c_3\alpha r^2}\right)p_r(2t,x_0,x_0).
\end{split}
\end{equation}

In addition, by Proposition \ref{estimate} and Definition
\ref{heat kernel}, there exists a constant
$R\geq\sqrt{\frac{\rho}{c_3\alpha}}$, which is independent of
$t$, such that for $r>R$,
\begin{equation}
\label{th4.1:5}
\begin{split}
p_r(2t,x_0,x_0)\geq \frac{c_2}{\mathcal{V}(x_0,\sqrt{2t})} \quad\quad (t>t_0).
\end{split}
\end{equation}

Combining \eqref{th4.1:4}, \eqref{th4.1:5} with
$\mathcal{V}(x_0,t)\leq ct^m$, we obtain
\begin{equation*}
\begin{split}
G(t,r)&\geq
\frac{C_2c_2}{c}(2t)^{-\frac{m}{2}}\log\left(\frac{t}{c_3\alpha
r^2}\right),
\end{split}
\end{equation*}
this yields
\begin{equation}
\label{th4.1:6}
\begin{split}
t^{\frac{m}{2}}G(t,r)&\geq C_3\log\left(\frac{t}{c_3\alpha r^2}\right)
\end{split}
\end{equation}
for any $t\geq c_3\alpha r^2$ and $r>R$, where $C_3=\frac{C_2c_2}{2^{\frac{m}{2}}c}$ and $R\geq\sqrt{\frac{\rho}{c_3\alpha}}$.

Consequently, from \eqref{th4.1:1-0} and \eqref{th4.1:6} we derive
that
\begin{equation}
\label{th4.1:7}
\begin{split}
C_3\log\left(\frac{t}{c_3\alpha r^2}\right)&\leq cc_1C''
\end{split}
\end{equation}
for $t\geq c_3\alpha r^2$ and $r>\max\left\{R,
\sqrt{\frac{\rho}{c_3}}\right\}$.

In fact, it is easy to observe that the reverse and strict
inequality of \eqref{th4.1:7} holds for sufficiently large $t$.
This leads to a contradiction. Hence, there is no non-negative
global solution for the equation \eqref{E:1.1}. The proof of
Theorem \ref{theorem2} is complete.

\end{proof}

\section{Proof of Theorem \ref{theorem3}}

We first introduce and prove the following lemma.


\begin{lemma}
\label{lemma:lemma5.2}
\textnormal{Suppose that $0<\alpha\leq
\frac{2}{m}$ and $D_\mu,D_\omega<\infty$. Let $G$ satisfy
$CDE'(n,0)$ and $\mathcal{V}(x_0,t)\leq c''t^m \log^\eta t$ for
$t\geq r_0>1$, where $c'', m, \eta$ are some positive constants
and $a(x_0)>0$. If the equation \eqref{E:1.1} has a non-negative
global solution $u(t,x)$, then there exists a constant $C''$ such that
\begin{equation}
\label{5.1-e33}
\begin{split}
\sum_{y\in B_r}\mu(y)u(t,y)\leq C''\left(\log\sqrt{c_3r^2}\right)^\eta
\end{split}
\end{equation}
for any $t>0$ and $r>\sqrt{\frac{\rho}{c_3}}$, where $\rho=\max\{t_0, r_0^2\}$.}
\end{lemma}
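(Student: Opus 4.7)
The plan is to adapt the proof of Lemma \ref{lemma:lemma4.2} to the polylogarithmic volume bound $\mathcal{V}(x_0,t)\leq c''t^m\log^\eta t$, working with the time-slice $u(t,\cdot)$ in place of the static initial datum $a$. Three ingredients will drive the argument: the lower Gaussian estimate from Proposition \ref{estimate}, the semigroup-decay inequality of Lemma \ref{lemma:lemma4.1}, and Remark \ref{remark:remark4.1}, which allows any slice $u(t,\cdot)$ of a non-negative global solution to play the role of an admissible initial datum.

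Fix $t>0$ and an auxiliary time $\tau>0$. Substituting $\mathcal{V}(x_0,\sqrt\tau)\leq c''\tau^{m/2}(\log\sqrt\tau)^\eta$ (valid once $\sqrt\tau\geq r_0$) into the lower Gaussian bound of Proposition \ref{estimate} and restricting the summation to $y\in B_r$ gives
\[
P_\tau u(t,\cdot)(x_0)\;\geq\;\frac{c_2}{c''\,\tau^{m/2}(\log\sqrt\tau)^\eta}\,\exp\!\left(-\frac{c_3 r^2}{\tau}\right)\sum_{y\in B_r}\mu(y)u(t,y).
\]
On the other hand, treating $u(t,\cdot)$ as the initial datum of a shifted Cauchy problem and invoking Lemma \ref{lemma:lemma4.1} produces $\tau^{1/\alpha}P_\tau u(t,\cdot)(x_0)\leq C'$. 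Combining the two bounds and rearranging for the sum over $B_r$, one obtains
\[
\sum_{y\in B_r}\mu(y)u(t,y)\;\leq\;\frac{c''C'}{c_2}\,\tau^{\,m/2-1/\alpha}\,(\log\sqrt\tau)^\eta\,\exp\!\left(\frac{c_3 r^2}{\tau}\right).
\]

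Choosing $\tau=c_3r^2$ collapses the exponential to the constant $e$; the hypothesis $r>\sqrt{\rho/c_3}$ with $\rho=\max\{t_0,r_0^2\}$ is precisely what is needed so that simultaneously $\tau>t_0$ (enabling the lower Gaussian bound) and $\sqrt\tau\geq r_0$ (enabling the volume estimate). Since $0<\alpha\leq 2/m$ forces $m/2-1/\alpha\leq 0$, and since $c_3r^2>\rho\geq r_0^2>1$, the factor $(c_3r^2)^{m/2-1/\alpha}$ is bounded by $1$. Absorbing the remaining constants into $C''=ec''C'/c_2$ then yields the desired inequality $\sum_{y\in B_r}\mu(y)u(t,y)\leq C''(\log\sqrt{c_3r^2})^\eta$.

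The main obstacle I expect is bookkeeping rather than conceptual. One must verify that both the sub-critical range $0<\alpha<2/m$ and the critical case $\alpha=2/m$ are handled uniformly through the single sign condition $m/2-1/\alpha\leq 0$, and that the requirement $r_0>1$ in (c.2) is essential in two ways: it ensures the polylogarithmic factor $(\log\sqrt{c_3r^2})^\eta$ is positive, and it keeps the non-positive power $(c_3r^2)^{m/2-1/\alpha}$ bounded above by $1$ rather than allowing it to blow up. Otherwise the argument parallels the proof of Lemma \ref{lemma:lemma4.2} step by step.
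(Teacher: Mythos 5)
Your proposal is correct and follows essentially the same route as the paper: lower Gaussian bound from Proposition \ref{estimate} combined with the volume hypothesis, the decay estimate $\tau^{1/\alpha}P_\tau(\cdot)\leq C'$ from Lemma \ref{lemma:lemma4.1} applied to the time-slice $u(t,\cdot)$ viewed as initial datum, the choice $\tau=c_3r^2$, and the observation that $\tau^{m/2-1/\alpha}\leq 1$ since $\alpha\leq 2/m$ and $\tau>1$. The only cosmetic difference is that you work with $u(t,\cdot)$ from the outset, whereas the paper first proves the bound for $a$ and then invokes the time-shift remark at the end.
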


\begin{proof}[Proof]
Using Proposition \ref{estimate} with $\mathcal{V}(x_0,t)\leq
c''t^m \log^\eta t$, we have for $t>\rho$ and $r>0$,
\begin{equation*}
\begin{split}
t^{\frac{m}{2}}P_t a(x_0) &\geq
\frac{c_2}{c''}\exp\left(-\frac{c_3r^2}{t}\right)\left(\log
\sqrt{t}\right)^{-\eta}\sum_{y\in B_r}\mu(y)a(y),
\end{split}
\end{equation*}
which, along with $0<\alpha\leq \frac{2}{m}$ and the inequality
asserted by Lemma \ref{lemma:lemma4.1}, leads us to
\begin{equation*}
\begin{split}
\sum_{y\in B_r}\mu(y)a(y)\leq \frac{c''C'}{c_2}\exp\left(\frac{c_3r^2}{t}\right)\left(\log \sqrt{t}\right)^{\eta}.
\end{split}
\end{equation*}

Choosing $t=c_3r^2$ in the above inequality gives
\begin{equation}
\label{e5.1}
\begin{split}
\sum_{y\in B_r}\mu(y)a(y)\leq C''\left(\log
\sqrt{c_3r^2}\right)^{\eta},
\end{split}
\end{equation}
where $C''=e\frac{c''C'}{c_2}$ and $r>\sqrt{\frac{\rho}{c_3}}$.

Note that for any $t$ the non-negative global solution $u(t,x)$ of
\eqref{E:1.1} can be regarded as the initial value of the equation
\eqref{E:1.1}, thus inequality \eqref{e5.1} implies that

\begin{equation*}
\begin{split}
\sum_{y\in B_r}\mu(y)u(t,y)\leq C''\left(\log \sqrt{c_3r^2}\right)^{\eta}.
\end{split}
\end{equation*}

The Lemma \ref{lemma:lemma5.2} is proved.

\end{proof}


\begin{proof}[Proof of Theorem \ref{theorem3}.]
We shall now prove Theorem \ref{theorem3} by contradiction. Assume
that $u(t,x)$ is a non-negative global solution of \eqref{E:1.1}.

Let
\begin{equation*}
G(t,r)=\sum_{y\in B_r}\mu(y)p(t,x_0,y)u(t,y) \quad\quad (t>0,\;r>0).
\end{equation*}

By using Proposition \ref{estimate}, Lemma \ref{lemma:lemma5.2}
and $\mathcal{V}(x_0,t)\geq c't^m\log^{-\zeta}t$, we obtain that
for $t\geq r_0^2$ and $r>\sqrt{\frac{\rho}{c_3}}$,
\begin{equation}
\label{E:5.1-1}
\begin{split}
G(t,r)&\leq \frac{c_1}{c'}t^{-\frac{m}{2}}\left(\log\sqrt{t}\right)^{\zeta}\sum_{y\in B_r}\mu(y)u(t,y)\\
&\leq \frac{c_1C''}{2^\zeta c'}t^{-\frac{m}{2}}\left(\log t\right)^{\zeta}\left(\log\sqrt{c_3r^2}\right)^\eta.
\end{split}
\end{equation}

Applying Theorem \ref{theorem1} and the above-mentioned inequality
$p(t,x,y)\geq p_r(t,x,y)$, we deduce that for any $t>0$ and $r>0$,
\begin{equation}
\label{E:5.2}
\begin{split}
G(t,r)&\geq \sum_{y\in B_r}\mu(y)p(t,x_0,y)\int_0^t P_{t-s}u(s,y)^{1+\alpha} ds\\
&\geq \sum_{y\in B_r}\mu(y)p(t,x_0,y)\int_0^t \sum_{z\in V}\mu(z)p(t-s,y,z)u(s,z)^{1+\alpha} ds\\
&\geq \int_0^t \sum_{y\in B_r}\sum_{z\in B_r}\mu(y)\mu(z)p(t,x_0,y)p(t-s,y,z)u(s,z)^{1+\alpha} ds\\
&\geq \int_0^t \sum_{y\in B_r}\sum_{z\in B_r}\mu(y)\mu(z)p_r(t,x_0,y)p_r(t-s,y,z)u(s,z)^{1+\alpha} ds\\
&=\int_0^t \sum_{z\in B_r}\mu(z)p_r(2t-s,x_0,z)u(s,z)^{1+\alpha} ds\\
&\geq \int_0^t \left(\sum_{z\in B_r}\mu(z)p_r(2t-s,x_0,z)u(s,z)\right)^{1+\alpha} ds,
\end{split}
\end{equation}
where the last inequality follows from the inequality of
Proposition \ref{pr} $\sum_{z\in B_r}\mu(z)p_r(2t-s,x_0,z)\leq 1$
and the Jensen-type inequality
$\sum_{i=1}^n\lambda_i\varphi(\tau_i)\geq
\varphi(\sum_{i=1}^n\lambda_i\tau_i),$ where $\varphi$ is convex
and satisfies $\varphi(0)=0$, $\sum_{i=1}^n\lambda_i \leq 1$,
$\lambda_i\geq 0$, $i=1,2,\ldots, n$ (see \cite{LW3}).

Using inequality \eqref{le4.1:2} and $a_{0}=\inf_{x\in V}{a(x)}$,
we deduce that
\begin{equation*}
\begin{split}
u(s,z)&\geq s\left(\sum_{x\in V}\mu(x)p(s,z,x)a(x)\right)^{1+\alpha}\\
&\geq s\left(a_0\sum_{x\in V}\mu(x)p(s,z,x)\right)^{\alpha}\left(\sum_{x\in V}\mu(x)p(s,z,x)a(x)\right).\\
\end{split}
\end{equation*}
Further, it follows from the stochastic completeness of $G$ that
for any $s>0$, $z\in V$ and $r>0$,
\begin{equation}
\label{E:5.3}
\begin{split}
u(s,z)&\geq
s a_0^{\alpha}\left(\sum_{x\in V}\mu(x)p(s,z,x)a(x)\right)\\
&\geq s a_0^{\alpha}\mu(x_0)a(x_0)p(s,x_0,z) \\
&\geq s a_0^{\alpha}\mu(x_0)a(x_0)p_r(s,x_0,z).
\end{split}
\end{equation}

Substituting \eqref{E:5.3} into \eqref{E:5.2}, we obtain that for $t>0$ and $r>0$,
\begin{equation}\label{eq56}
\begin{split}
G(t,r)&\geq \big(a_0^{\alpha}C_1\big)^{1+\alpha}\int_0^t s^{1+\alpha}
\left(\sum_{z\in B_r}\mu(z)p_r(2t-s,x_0,z)p_r(s,x_0,z)\right)^{1+\alpha} ds\\
&= \big(a_0^{\alpha}C_1\big)^{1+\alpha}p_r(2t,x_0,x_0)^{1+\alpha}\int_0^t s^{1+\alpha} ds\\
&= \frac{\big(a_0^{\alpha}C_1\big)^{1+\alpha}}{2+\alpha}t^{2+\alpha}p_r(2t,x_0,x_0)^{1+\alpha},
\end{split}
\end{equation}
where $C_1=\mu(x_0)a(x_0)$.

By Proposition \ref{estimate} and Definition \ref{heat kernel},
there exists a constant $R'\geq\sqrt{\frac{\rho}{c_3}}$, which is
independent of $t$, such that for $r>R'$,
\begin{equation*}
\begin{split}
p_r(2t,x_0,x_0)\geq \frac{c_2}{\mathcal{V}(x_0,\sqrt{2t})}
\quad\quad (t>t_0),
\end{split}
\end{equation*}
which, along with $\mathcal{V}(x_0,t)\leq c''t^m \log^\eta t$,
yields that
\begin{equation*}
\begin{split}
p_r(2t,x_0,x_0)\geq \frac{2^\eta c_2}{c''}(2t)^{-\frac{m}{2}}\left(\log 2t\right)^{-\eta} \quad\quad (t>\rho).
\end{split}
\end{equation*}

Applying the above inequality to \eqref{eq56},  we obtain that for
$t>\rho$ and $r>R'$,
\begin{equation}\label{eq57}
\begin{split}
G(t,r)&\geq \frac{2^{\eta(1+\alpha)-\frac{m(1+\alpha)}{2}}}{2+\alpha}\left(\frac{a_0^{\alpha}c_2C_1}{c''}\right)^{1+\alpha}
t^{2+\alpha-\frac{m(1+\alpha)}{2}}
\left(\log 2t\right)^{-\eta(1+\alpha)}.
\end{split}
\end{equation}

Combining \eqref{E:5.1-1} and \eqref{eq57}, we get
\begin{equation*}
\begin{split}
\frac{2^{\eta(1+\alpha)-\frac{m(1+\alpha)}{2}}}{2+\alpha}\left(\frac{a_0^{\alpha}c_2C_1}{c''}\right)^{1+\alpha}
\frac{t^{2+\alpha-\frac{m\alpha}{2}}}{\left(\log 2t\right)^{\eta(1+\alpha)}\left(\log t\right)^{\zeta}}
\leq
\frac{c_1C''}{2^\zeta c'}\left(\log\sqrt{c_3r^2}\right)^\eta.
\end{split}
\end{equation*}
In virtue of $m\alpha\leq 2$,  we obtain
\begin{equation*}
\begin{split}
\widetilde{C} \frac{t^{1+\alpha}}{\left(\log
2t\right)^{\eta(1+\alpha)}\left(\log t\right)^{\zeta}} \leq 1,
\end{split}
\end{equation*}
this yields
\begin{equation}
\label{E:5.4}
\begin{split}
\widetilde{C} \left(\frac{t^b}{\log
2t}\right)^{\eta(1+\alpha)+\zeta}
\leq 1,
\end{split}
\end{equation}
for $t>\rho$, where $b=\frac{1+\alpha}{\eta(1+\alpha)+\zeta}>0$,
$\widetilde{C}=\frac{c'2^{\left(\eta-\frac{1}{\alpha}\right)(1+\alpha)+\zeta}}{(2+\alpha)c_1C''}
\left(\frac{a_0^{\alpha}c_2C_1}{c''}\right)^{1+\alpha}\left(\log\sqrt{c_3r^2}\right)^{-\eta}$
and $r>R'$.

Actually, it is easy to see that the reverse and strict inequality
of \eqref{E:5.4} holds for sufficiently large $t$. This leads to a
contradiction. Hence, the equation \eqref{E:1.1} has no
non-negative global solution. This completes the proof of Theorem
\ref{theorem3}.
\end{proof}

\section{Concluding remarks}

Let us consider a special case of Theorem \ref{theorem3}. Taking $\zeta=0$ in Theorem \ref{theorem3},
we have
\begin{claim}
\label{claim3}
\textnormal{
Suppose that $\inf_{x\in V}a(x)>0$ and $D_\mu,D_\omega<\infty$. Let $G$ satisfy
$CDE'(n,0)$ and $c'r^m\leq \mathcal{V}(x_0,r)\leq c''r^m \log^\eta r$ $(\eta>0)$ for $r\geq r_0>1$ and $x_0\in V$. If $0<\alpha\leq
\frac{2}{m}$, then all non-negative solutions of \eqref{E:1.1}
blow up in a finite time.}
\end{claim}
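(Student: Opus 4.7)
The plan is essentially trivial: Claim~\ref{claim3} is exactly the instantiation of Theorem~\ref{theorem3} at $\zeta=0$, so the proof amounts to verifying that every hypothesis of the claim is a hypothesis of Theorem~\ref{theorem3} (in its specialized form) and then citing that theorem.

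First I would check the volume growth hypothesis. Condition (c.2) reads $c'r^m\log^{-\zeta}r\leq \mathcal{V}(x_0,r)\leq c''r^m\log^\eta r$, and setting $\zeta=0$ collapses $\log^{-\zeta}r\equiv 1$, yielding $c'r^m\leq \mathcal{V}(x_0,r)\leq c''r^m\log^\eta r$ for $r\geq r_0>1$. This is precisely the volume hypothesis of the claim. The remaining hypotheses — $D_\mu,D_\omega<\infty$, the curvature condition $CDE'(n,0)$, the range $0<\alpha\leq \tfrac{2}{m}$, and $\inf_{x\in V}a(x)=a_0>0$ — are inherited verbatim from Theorem~\ref{theorem3}.

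Having matched all hypotheses, I would simply apply Theorem~\ref{theorem3} to conclude that every non-negative solution of \eqref{E:1.1} blows up in finite time. Alternatively, for an in-place verification one could re-run the contradiction argument in the proof of Theorem~\ref{theorem3}: the key inequality \eqref{E:5.4} becomes $\widetilde{C}\bigl(t^b/\log 2t\bigr)^{\eta(1+\alpha)}\leq 1$ with $b=1/\eta>0$, whose left-hand side still tends to $+\infty$ as $t\to\infty$, producing the required contradiction.

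There is no genuine obstacle; the only thing worth flagging is the role of the assumption $\eta>0$ in the claim. When $\eta=0$ one recovers the two-sided uniform polynomial growth condition (c.1) already handled by Theorem~\ref{theorem2}, whereas $\eta>0$ represents a genuine relaxation of the upper volume bound, illustrating that the positive infimum on the initial datum $a(x)$ allows one to trade the sharp polynomial upper bound for a logarithmic slack while still covering the critical case $\alpha=2/m$. This is precisely the complementary content the author wishes to highlight relative to the earlier work of Lin and Wu.
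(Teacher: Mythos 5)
Your proposal is correct and matches the paper exactly: the paper states Claim \ref{claim3} precisely as the specialization of Theorem \ref{theorem3} to $\zeta=0$, with no further argument. Your verification of the hypotheses and of the specialized inequality \eqref{E:5.4} (with $b=1/\eta$) is accurate.
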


Recently, we have proved in \cite{LW2} the following
\begin{claim}
\label{claim2}
\textnormal{
Under the assumption of Claim \ref{claim3},
if $\alpha>\frac{2}{m}$, then there exists a non-negative global solution to \eqref{E:1.1} when the initial value satisfies
$a(x)\leq \delta p(\gamma, x_0,x)$ for some small $\delta>0$ and any fixed $\gamma\geq r_0^2$.}
\end{claim}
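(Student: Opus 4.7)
The plan is to regard Claim \ref{claim2} as a quotation from \cite{LW2} and, in principle, simply cite it. For a self-contained argument I would run a Picard/monotone iteration in the integral formulation provided by Theorem \ref{theorem1}, using a super-solution built from the heat kernel itself. The supercriticality $\alpha>\frac{2}{m}$ is exactly what makes the relevant time integral converge.

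Concretely, I define
\begin{equation*}
u_0(t,x)\equiv 0,\qquad u_{n+1}(t,x)=P_t a(x)+\int_0^t P_{t-s}\,u_n(s,x)^{1+\alpha}\,ds.
\end{equation*}
Since $a\geq 0$, an induction shows $0\leq u_n\leq u_{n+1}$. If I can produce a fixed function $U(t,x)$ with $u_n\leq U$ for every $n$, the monotone limit $u=\lim_n u_n$ is well defined, bounded, and satisfies the integral equation of Theorem \ref{theorem1}; the computation in the proof of Theorem \ref{theorem1} (differentiating the Duhamel term) then shows $u$ solves \eqref{E:1.1} on all of $(0,\infty)\times V$, giving a global solution.

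For the super-solution I take $U(t,x)=C\,p(\gamma+t,x_0,x)$ with $\gamma\geq r_0^2$ and a constant $C$ to be chosen. Two ingredients drive the estimate. First, the semigroup property together with the hypothesis $a(x)\leq\delta p(\gamma,x_0,x)$ yields
\begin{equation*}
P_t a(x)\leq\delta\,P_t\,p(\gamma,x_0,\cdot)(x)=\delta\,p(\gamma+t,x_0,x).
\end{equation*}
Second, the upper heat kernel bound of Proposition \ref{estimate} combined with the lower volume bound $\mathcal{V}(x_0,r)\geq c'r^m$ (valid for $\sqrt{\gamma+s}\geq r_0$, which is automatic since $\gamma\geq r_0^2$) gives $p(\gamma+s,x_0,y)\leq c_1/(c'(\gamma+s)^{m/2})$ uniformly in $y$. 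Pulling out one factor of $\alpha$ and using Chapman--Kolmogorov yields
\begin{equation*}
\int_0^t P_{t-s}\bigl(U(s,\cdot)^{1+\alpha}\bigr)(x)\,ds
\leq C^{1+\alpha}\Bigl(\tfrac{c_1}{c'}\Bigr)^{\alpha}\,p(\gamma+t,x_0,x)\int_0^t(\gamma+s)^{-m\alpha/2}\,ds.
\end{equation*}
Because $\alpha>\tfrac{2}{m}$, we have $m\alpha/2>1$, so the last integral is bounded by $K_\gamma:=\gamma^{1-m\alpha/2}/(m\alpha/2-1)<\infty$; crucially, no $\log$ factor enters, so the $\log^{\eta}r$ in the upper volume bound is harmless here.

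The super-solution inequality $P_t a+\int_0^t P_{t-s}U^{1+\alpha}\,ds\leq U$ therefore reduces to the scalar condition $\delta+(c_1/c')^\alpha K_\gamma\,C^{1+\alpha}\leq C$, which is solvable in $C$ whenever $\delta$ is small (for instance by setting $C=2\delta$ and requiring $(c_1/c')^\alpha K_\gamma(2\delta)^\alpha\leq\tfrac12$). This closes the induction $u_n\leq U$. The main technical obstacle I anticipate is exactly this balancing: the constant $K_\gamma$ depends on $\gamma$ and on the graph constants, so one must verify that for every fixed $\gamma\geq r_0^2$ there is a genuinely positive $\delta=\delta(\gamma)$ making the algebraic inequality feasible, and check that the iteration stays non-negative and integrable in $y$ at each step; both are secured by the a priori pointwise bound $u_n(t,x)\leq C\,p(\gamma+t,x_0,x)$ together with $\sum_y\mu(y)p(t,x,y)\leq 1$.
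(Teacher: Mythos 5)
Your argument is correct, and it is essentially the standard construction behind this claim: the paper itself gives no proof (Claim \ref{claim2} is quoted from \cite{LW2}), and the form of the smallness hypothesis $a(x)\leq\delta p(\gamma,x_0,x)$ indicates that \cite{LW2} uses exactly your supersolution $C\,p(\gamma+t,x_0,x)$ together with the heat-kernel upper bound, the lower volume bound $\mathcal{V}(x_0,r)\geq c'r^m$, and the convergence of $\int_0^\infty(\gamma+s)^{-m\alpha/2}\,ds$ for $m\alpha/2>1$. Your closing of the iteration via the scalar inequality $\delta+(c_1/c')^{\alpha}K_\gamma C^{1+\alpha}\leq C$ with $C=2\delta$ is sound, so there is nothing substantive to add.
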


The results, described in Claims \ref{claim3} and \ref{claim2}, provide
a complete answer to the existence problem of non-negative global solutions for equation \eqref{E:1.1} with $\inf_{x\in V}a(x)>0$
on locally finite graphs that satisfy $CDE'(n,0)$ and $c'r^m\leq \mathcal{V}(x_0,r)\leq c''r^m \log^\eta r$ $(\eta>0)$ for $r\geq r_0$.

Besides, under the condition that $D_\mu, D_\omega<\infty$ and the
graph satisfies $CDE'(n,0)$ and uniform polynomial volume growth
of degree $m$, we have obtained in \cite{LW2} the following
result:
\begin{claim}
\label{claim1}
\textnormal{
If $0<\alpha<\frac{2}{m}$, then all non-negative solutions of \eqref{E:1.1} blow up in a finite time and
if $\alpha>\frac{2}{m}$, then there exists a non-negative global solution to \eqref{E:1.1} for a sufficiently small initial value.}
\end{claim}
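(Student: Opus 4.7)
The plan is to prove the two parts of Claim~\ref{claim1} separately, reusing the machinery already developed in this paper. For the subcritical blow-up case $0 < \alpha < 2/m$, I would argue by contradiction. Assume a non-negative global solution $u(t,x)$ exists. Then Lemma~\ref{lemma:lemma4.1} yields the uniform bound $t^{1/\alpha} P_t a(x) \leq C'$ for every $t > 0$ and $x \in V$. On the other hand, since $a \geq 0$ and $a(x_0) > 0$, the semigroup representation gives $P_t a(x_0) \geq \mu(x_0) a(x_0)\, p(t,x_0,x_0)$, and combining the on-diagonal lower bound of Proposition~\ref{estimate} with the volume upper bound $\mathcal{V}(x_0,\sqrt{t}) \leq c t^{m/2}$ yields
\begin{equation*}
t^{1/\alpha} P_t a(x_0) \;\geq\; \frac{c_2 \mu(x_0) a(x_0)}{c}\, t^{\,1/\alpha - m/2}
\end{equation*}
for $t > t_0$. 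Since $1/\alpha - m/2 > 0$ when $\alpha < 2/m$, the right-hand side diverges as $t \to \infty$, contradicting the uniform bound. This is strictly easier than the critical case treated in Theorem~\ref{theorem2}, where the exponent matches exactly and the contradiction has to be extracted from a logarithmic factor.

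For the supercritical global-existence case $\alpha > 2/m$, I would build a supersolution on the integral equation of Theorem~\ref{theorem1} and run a monotone iteration. The natural candidate is $U(t,x) = \delta\, p(t+\gamma, x_0, x)$ with a fixed $\gamma \geq r_0^2$ and a small parameter $\delta > 0$; the initial-data hypothesis $a(x) \leq \delta p(\gamma, x_0, x)$ then gives $P_t a(x) \leq \delta p(t+\gamma, x_0, x) = U(t,x)$ by the semigroup property. For the nonlinear term, the upper heat-kernel bound of Proposition~\ref{estimate} combined with the polynomial volume growth yields $p(s+\gamma, x_0, y)^{\alpha} \leq (c_1 c)^{\alpha}(s+\gamma)^{-m\alpha/2}$, so by the semigroup identity $P_{t-s}\,p(s+\gamma, x_0, \cdot)(x) = p(t+\gamma, x_0, x)$ one obtains
\begin{equation*}
\int_0^t P_{t-s} U(s,\cdot)^{1+\alpha}(x)\, ds \;\leq\; \delta^{1+\alpha} (c_1 c)^{\alpha}\, p(t+\gamma, x_0, x) \int_0^t (s+\gamma)^{-m\alpha/2}\, ds.
\end{equation*}
Because $\alpha > 2/m$ forces $m\alpha/2 > 1$, the time integral is bounded by $C(\alpha,m)\, \gamma^{1-m\alpha/2}$ uniformly in $t$; choosing $\delta$ small enough that $\delta^{\alpha}(c_1 c)^{\alpha} C(\alpha,m)\gamma^{1-m\alpha/2} \leq 1/2$ makes the right-hand side at most $\tfrac12 U(t,x)$. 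Hence the operator $\mathcal{T}u := P_t a + \int_0^t P_{t-s} u^{1+\alpha}\, ds$ preserves the order interval $\{0 \leq u \leq U\}$, and iterating from $u_0 \equiv 0$ (or from $P_t a$) produces a monotone increasing sequence dominated by $U$; its pointwise limit is a bounded non-negative global solution.

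I expect the main obstacle to be the supersolution construction in the supercritical part, specifically the joint choice of $\gamma$ and $\delta$: one must simultaneously absorb $P_t a$ into $U$ (handled by the smallness $a \leq \delta p(\gamma,x_0,\cdot)$ and the semigroup identity) and control the nonlinear Duhamel term (handled by the integrability of $(s+\gamma)^{-m\alpha/2}$, which is exactly where the threshold $2/m$ appears). By contrast, the subcritical half is an immediate one-line corollary of Lemma~\ref{lemma:lemma4.1} and the heat-kernel lower bound, and requires none of the iterated Jensen estimates needed for Lemma~\ref{lemma:lemma4.2} or for Theorem~\ref{theorem3}.
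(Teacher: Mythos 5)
You should first note that this paper does not actually prove Claim~\ref{claim1}: it is quoted verbatim from the earlier work \cite{LW2} as background, so there is no in-paper proof to compare against. That said, your reconstruction is essentially the argument of that reference. The subcritical half is correct as written: under the standing hypotheses ($D_\mu,D_\omega<\infty$, $CDE'(n,0)$, uniform polynomial volume growth) Lemma~\ref{lemma:lemma4.1} gives $t^{1/\alpha}P_ta(x)\le C'$, while $P_ta(x_0)\ge \mu(x_0)a(x_0)p(t,x_0,x_0)\ge \frac{c_2\mu(x_0)a(x_0)}{c}t^{-m/2}$ for $t>\max\{t_0,r_0^2\}$ by Proposition~\ref{estimate} and the volume upper bound, and $1/\alpha-m/2>0$ forces a contradiction as $t\to\infty$. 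You are also right that this is exactly where the critical case fails and why Theorem~\ref{theorem2} needs the extra logarithmic mechanism.

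In the supercritical half there is one small but genuine slip in the bookkeeping. With $U(t,x)=\delta p(t+\gamma,x_0,x)$ you establish $P_ta\le U$ and $\int_0^tP_{t-s}U^{1+\alpha}\,ds\le\tfrac12U$, which gives $\mathcal{T}U\le\tfrac32U$, not $\mathcal{T}U\le U$; so the order interval $\{0\le u\le U\}$ is not preserved as claimed. The standard repair is to take the supersolution $U=2\delta p(t+\gamma,x_0,x)$ while keeping $a\le\delta p(\gamma,x_0,\cdot)$ (so $P_ta\le\tfrac12U$) and shrinking $\delta$ so that $(2\delta)^{\alpha}(c_1c)^{\alpha}\frac{\gamma^{1-m\alpha/2}}{m\alpha/2-1}\le\tfrac12$; then $\mathcal{T}U\le U$ and the monotone iteration from $u_0\equiv0$ converges to a bounded limit. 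You should also add a sentence verifying that the limit of the iteration, which a priori solves only the integral equation \eqref{th31}, is continuous in $t$ and hence solves \eqref{E:1.1} itself; this follows by running the computation in the proof of Theorem~\ref{theorem1} in the forward direction with $g=u^{1+\alpha}$. With those two repairs the argument is complete and is, in substance, the proof given in \cite{LW2}.
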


As a complement to Claim \ref{claim1}, the present result stated
in Theorem \ref{theorem2} reveals that all non-negative solutions
of \eqref{E:1.1} blow up in a finite time if $\alpha=\frac{2}{m}$.


\section*{Acknowledgments}

This research was supported by the National Science Foundation of China (No.11901550).

\vspace{30pt}

\def\refname{\Large\textbf{References}}

\vspace{50pt}

{\setlength{\parindent}{0pt}
Yiting Wu,\\
Department of Mathematics, China Jiliang University,  Hangzhou, 310018, P. R. China\\
yitingwu@cjlu.edu.cn; yitingly@126.com}

\end{document}